\documentclass[12pt]{amsart}
\usepackage{a4wide,enumerate,xcolor}
\usepackage{amsmath,graphicx,comment}
\allowdisplaybreaks

\let\pa\partial
\let\na\nabla
\let\eps\varepsilon
\newcommand{\N}{{\mathbb N}}
\newcommand{\R}{{\mathbb R}}
\newcommand{\diver}{\operatorname{div}}

\newtheorem{theorem}{Theorem}
\newtheorem{lemma}[theorem]{Lemma}

\newtheorem{remark}[theorem]{Remark}

\newtheorem{definition}{Definition}

\begin{document}

\title[A nonlocal Busenberg--Travis system]{A nonlocal Busenberg--Travis cross-diffusion system \\ with nonlinear Brinkman law}

\author[P. Hirvonen]{Peter Hirvonen}
\address{Institute of Analysis and Scientific Computing, TU Wien, Wiedner Hauptstra\ss e 8--10, 1040 Wien, Austria}
\email{peter.hirvonen@tuwien.ac.at} 

\author[A. J\"ungel]{Ansgar J\"ungel}
\address{Institute of Analysis and Scientific Computing, TU Wien, Wiedner Hauptstra\ss e 8--10, 1040 Wien, Austria}
\email{juengel@tuwien.ac.at} 

\date{\today}

\thanks{The authors acknowledge partial support from the Austrian Science Fund (FWF), grant 10.55776/PAT2687825, and from the Austrian Federal Ministry for Women, Science and Research and implemented by \"OAD, project MultHeFlo. This work has received funding from the European Research Council (ERC) under the European Union's Horizon 2020 research and innovation programme, ERC Advanced Grant NEUROMORPH, no.~101018153. For open-access purposes, the authors have applied a CC BY public copyright license to any author-accepted manuscript version arising from this submission.} 

\begin{abstract}
A nonlocal Busenberg--Travis cross-diffusion system for segregating populations is analyzed in a bounded domain with no-flux boundary conditions. The velocities of the species solve a regularized Darcy law, which can be interpreted as a Brinkman equation. Compared to results in the literature, the density--pressure relation is assumed to be nonlinear. The global existence of weak solutions to this system is shown for a broad range of the exponents of the power-law nonlinearity, and the localization limit is proved. The proofs are based on uniform estimates coming from the Tsallis entropy inequality. Due to regularity issues, the original problem is approximated by various schemes, and the de-regularization limits are obtained through compactness arguments. 
\end{abstract}

\keywords{Population dynamics, cross-diffusion systems, Brinkman law, existence of weak solutions, nonlocal-to-local limit.}  
 
\subjclass[2000]{35K20, 35K59, 35K65, 35Q92, 92D25.}

\maketitle


\section{Introduction}

The Busenberg--Travis model has emerged as a fundamental framework for capturing the spatial-temporal evolution of segregating populations with nonlinear interaction mechanisms \cite{BuTr83}. According to Darcy's law, the partial velocity is given by the gradient of the pressure. Grindrod \cite{Gri88} replaced Darcy's law by Brinkman's law, which augments Darcy's description by a diffusion term accounting for viscous stresses \cite{Bri49}. This modification captures intermediate regimes between purely Darcy and Stokes flows. The resulting system consists of nonlocal cross-diffusion equations. While a linear dependence of the pressure functions on the partial densities was suggested and analyzed in \cite{JVZ24}, we consider here a nonlinear power-law dependence. For this model, we establish for the first time the global existence of weak solutions and rigorously justify the localization limit.

\subsection{Model equations}

The Busenberg--Travis equations with Brinkman-type law for the population densities $u_i(x,t)\ge 0$ and the velocities $v_i(x,t)\in\R^d$ read as
\begin{align}
  \pa_t u_i - \sigma_i\Delta u_i + \diver(u_iv_i) &= 0 \quad
  \mbox{in }\Omega,\ t>0, \label{1.u} \\
  -\eps\Delta v_i + v_i &= -\na p_i(u), \quad  i=1,\ldots,n,
  \label{1.v}
\end{align}
where $\Omega\subset\R^d$ ($d\ge 1$) is a bounded domain, $u=(u_1,\ldots,u_n)$ is the solution vector, $\sigma_i>0$ are the diffusivities, and the parameter $\eps$ distinguishes between the Darcy law ($\eps=0$) and the Brinkman law ($\eps>0$). The pressure function $p_i(u)$ is given by
\begin{align}
  p_i(u) = \sum_{j=1}^n a_{ij}u_j^\beta, \label{1.p}
\end{align}
where $\beta>0$ and $a_{ij}\in\R$, and we impose initial and no-flux boundary conditions
\begin{align}\label{1.bic}
  u_i(0) = u_i^0 \ \mbox{in }\Omega, \quad
  (\sigma_i\na u_i + u_iv_i)\cdot\nu = 0, \ 
  v_i = 0\ \mbox{on }\pa\Omega,\ i=1,\ldots,n.
\end{align}
The model consists of the mass conservation equations \eqref{1.u}, the Brinkman equations \eqref{1.v}, and the nonlinear density--pressure relations \eqref{1.p}. While a linear dependence between the pressures and densities ($\beta=1$) corresponds to ideal fluid mixtures, a nonlinear pressure dependence accounts for non-ideal phenomena. The existence of global {\em bounded} weak solutions was proved in \cite{JVZ24} in one space dimension only. A mathematical motivation for studying system \eqref{1.u}--\eqref{1.bic} is to investigate whether bounded solutions exist in higher space dimensions for $\beta\neq 1$. 

\subsection{State of the art}

The original two-species Busenberg--Travis system with $a_{ij}=1$ and $\eps=0$ was suggested in \cite{BuTr83} to describe the segregation of population species. It was analyzed in, e.g., \cite{BGHP85,BHIM12}. The analysis was extended to an arbitrary number of species and $a_{ij}=a_j$ in \cite{DHJ23}. The model was rigorously derived from a moderately interacting particle system using a mean-field approach \cite{CDJ19}. The density-pressure relation $p_i(u)=(\sum_{j=1}^n a_ju_j)^m$ with $m\ge 1$ was investigated in \cite{DrJu20,LLP17}. In the works \cite{CSS26,GHLP22,JPZ24}, nonlocal relations of the type $p_i(u)=\sum_{j=1}^n K_{ij}*u_j$ have been analyzed. 

Grindrod added a diffusion term with parameter $\eps>0$ to smooth sharp spatial variations in $\na p_i(u)$; see equation \eqref{1.v}. The system with linear density--pressure relations was formally derived from a kinetic model in \cite{JPT25}. We present the derivation from a kinetic system leading to the nonlinear Brinkman equations \eqref{1.v} in Appendix \ref{sec.model}. The existence and stability of
uniform steady states was shown in \cite{KKMGV17}, and in \cite{JVZ24}, the global existence of weak solutions to the model with linear density--pressure relations was proved, assuming that the matrix $(a_{ij})$ is positive definite.

The localization limit $\eps\to 0$ in system \eqref{1.u}--\eqref{1.p} with linear density--pressure relation was proved in \cite{JPZ24}. For related localization limits in the kernel function associated to nonlocal Busenberg--Travis systems, we refer to, e.g., \cite{BuEs23,CEW24,DHPP24}. To the best of our knowledge, the existence analysis and the nonlocal-to-local limit for system \eqref{1.u}--\eqref{1.bic} are new.

\subsection{Mathematical tools and strategy}

The analysis of the model with linear density--pressure dependence was performed by using the entropy method, with the Boltzmann entropy density $h_1(u)=\sum_{i=1}^n u_i(\log u_i-1)$. Due to the power-law nonlinearity in \eqref{1.v}, this entropy functional cannot be used here. Instead we consider the Tsallis entropy
\begin{align*}
  H(u) = \int_\Omega h(u)dx 
  = \sum_{i=1}^n\int_\Omega\frac{u_i^\beta-u_i}{\beta-1}dx,
\end{align*}
which is a convex functional for any $\beta>0$, $\beta\neq 1$. In the limit $\beta\to 1$, we find that $(u_i^\beta-u_i)/(\beta-1)\to u_i\log u_i$. To derive the entropy inequality, which provides a priori estimates, we need some notation. Let $v_i=L_\eps(-\na p_i(u))$ be the solution to \eqref{1.v} with homogeneous Dirichlet boundary conditions. The operator $L_\eps$ on $H^{1}(\Omega;\R^d)'$ admits the square root operator $K_\eps$ satisfying $K_\eps\circ K_\eps = L_\eps$. Assuming that $(a_{ij})$ is positive definite with smallest eigenvalue $\alpha>0$, a formal computation shows that, along solutions to \eqref{1.u}--\eqref{1.bic}, 
\begin{align}\label{1.ei}
  \frac{d}{dt}H(u) + \frac{4}{\beta}\sum_{i=1}^n\sigma_i\int_\Omega
  |\na u_i^{\beta/2}|^2 dx + \alpha\sum_{i=1}^n
  \int_\Omega|K_\eps(\na u_i^\beta)|^2 dx \le 0, \quad t\in(0,T).
\end{align}
This gives bounds for $u_i$ in $L^\infty(0,T;L^{\max\{1,\beta\}}(\Omega))$, $u_i^{\beta/2}$ in  $L^2(0,T;H^1(\Omega))$, and $v_i$ in $L^2(\Omega\times(0,T);\R^d)$ (the last bound follows from \eqref{2.Leps} below). The analysis of our model depends on $\beta$, and we distinguish four cases:
\begin{itemize}
\item $0<\beta<1/d$, $d\ge 1$: Mass conservation implies that $u_i^\beta$ is bounded in $L^\infty(0,T;L^{1/\beta}(\Omega))$, and we deduce from elliptic regularity \cite{Gro89} that $v_i$ is bounded in $L^\infty(0,T;W^{1,1/\beta}(\Omega;$ $\R^d))$, which embeds continuously into $L^\infty(\Omega_T;\R^d)$, where $\Omega_T=\Omega\times(0,T)$. This allows us to apply the Alikakos iteration technique as in \cite{JVZ24} to conclude that $u_i$ is bounded in $L^\infty(0,T;L^\infty(\Omega))$. In this situation, we obtain a {\em bounded} weak solution.
\item $\beta=1/2$, $d=2$: In this case, we can prove estimates only in non-reflexive spaces, and for this reason, the existence of just integrable solutions can be guaranteed; see Remark \ref{rem.12}.
\item $1/2<\beta<1$, $d\le 2$: Since we cannot obtain bounded velocities $v_i$, this case is more delicate than the case $\beta<1/d$. We only obtain bounds for $u_i$ in $L^{\beta+1}(\Omega_T)$ and $v_i$ in $L^r(\Omega_T;\R^d)$ for some $r>4$. For this case, an elliptic regularity result is assumed to be satisfied, see Assumption (A4) below.
\item $\max\{1,2d/(d+2)\}<\beta<2$, $d\ge 1$: The entropy inequality yields a bound for $u_i$ in $L^\infty(0,T;L^\beta(\Omega))$, which is better than pure mass conservation. Together with inequality \eqref{1.ei}, this yields bounds for $u_i$ in $L^{\beta(d+2)/d}(\Omega_T)$ and for $v_i$ in $L^2(\Omega_T;\R^d)$. 
\item $\beta\ge 2$, $d\ge 1$: We obtain the same regularity for $u_i$ and $v_i$ as in the previous case, but $\na u_i$ is no longer bounded in any Lebesgue space because of the degeneracy. Instead, by the entropy inequality, the expression $\na u_i^{\beta/2}$ is bounded in $L^2(\Omega_T)$. 
\end{itemize}

Summarizing, we prove a global existence result for all $0<\beta<\infty$ if $d\le 2$ and for all $0<\beta<1/d$, $2d/(d+2)<\beta<\infty$ if $d\ge 3$. The case $\beta=1$ is covered in \cite{JVZ24}. The case $1/d\le\beta\le 2d/(d+2)$ for $d\ge 3$ is open, since the elliptic regularization \eqref{1.v} is not strong enough to provide suitable estimates.

To make our results rigorous, we need to regularize equations \eqref{1.u} and \eqref{1.v} in a suitable way. The regularization depends on the value of $\beta$. As the parabolic problem may lead to issues with respect to the time regularity, we approximate \eqref{1.u} by an implicit Euler scheme with time step size $\tau>0$. When $\beta>1/2$, we approximate equation \eqref{1.v} with a regularized solution operator $L_\eps^\eta$ on $H^m(\Omega;\R^d)'$ with parameter $\eta>0$ such that $L_\eps^\eta$ maps into $H^m(\Omega;\R^d)\hookrightarrow L^\infty(\Omega;\R^d)$ if $m>d/2$. Then the solution $v_i=L_\eps^\eta(-\na p_i(u))$ is bounded in $L^\infty(\Omega;\R^d)$. This bound may blow up in the limit $\eta\to 0$. Because of regularity issues, the limit $(\eta,\tau)\to 0$ can only be performed in two space dimensions. The difficulty is to show that $u_iv_i$ is integrable. A further difficulty arises in the case $\beta\ge 2$, since $v_i=L_\eps^\eta(-\na p_i(u))$ may not exist if $p_i(u)\not\in L^1(\Omega)$. Therefore, we approximate $u_i^\beta$ by introducing a truncation operator $S_N^\beta(u_i)$ with parameter $N>0$. Then $S_N^\beta(u_i)\in L^\infty(\Omega)$ and $v_i\in L^\infty(\Omega;\R^d)$. Hence, we need to pass to the limit in the three parameters $N\to\infty$ and $(\eta,\tau)\to 0$. This gives the global existence of weak solutions.

A second result concerns the localization limit $\eps\to 0$. As this limit is singular, we expect to lose some a priori bounds, in particular those bounds for $v_i$ that are based on elliptic regularity. This issue is overcome by observing that the estimates from the entropy inequality \eqref{1.ei} are independent of $\eps$. These estimates are not sufficient to cover all ranges of $\beta$, since we can at best expect a bound for $v_i$ in $L^2(\Omega_T)$. Thus, to define the product $u_iv_i$, we need a bound for $u_i$ in some space $L^\rho(\Omega_T)$ with $\rho>2$. For this reason, we need small space dimensions or large values of $\beta$ to perform this limit rigorously.

\subsection{Main results}

In the following, we present the precise main results. First, we summarize our assumptions:

\begin{itemize}
\item[(A1)] Domain: $\Omega\subset\R^d, d\ge 1$, is a bounded domain with smooth boundary. 
\item[(A2)] Parameters: $T>0$, $\sigma_1,\ldots,\sigma_n>0$, $\eps>0$, $(a_{ij})_{i,j=1}^n \subset \R^{n\times n}$ is a positive definite matrix with smallest eigenvalue $\alpha>0$. 
\item[(A3)] Initial Data: $u_i^0 \in L^\infty(\Omega)$, $u_i^0(x)>0$ for a.e.\ $x\in \Omega$.
\item[(A4)] Elliptic Regularity: For $f\in L^p(\Omega;\R^d)$, $p\in(1,\infty)$, and $m>1$, the higher-order elliptic problem
\begin{equation}\label{1.A4}
  \int_\Omega\bigg(\eta\sum_{|\alpha|=m}D^\alpha v D^\alpha\phi
  + \na v\cdot\na\phi + v\phi\bigg)dx = -\int_\Omega f\diver\phi dx
\end{equation}
for $\phi\in H^m(\Omega)\cap H_0^1(\Omega)$ has a solution $v\in H^m(\Omega)\cap H_0^1(\Omega)$ that satisfies $\|v\|_{W^{1,p}(\Omega)}\leq C$, where $C>0$ does not depend on $\eta>0$.
\end{itemize}

The symbol $D^\alpha$ in Assumption (A4) stands for the partial derivative of order $|\alpha|$, where $\alpha\in\N^d$ is a multiindex. This condition is only needed for the case $\beta\in (1/2,1)$ and $d=2$; see Theorem \ref{thm.ex2}. We briefly discuss the validity of this assumption in Appendix \ref{sec:appEllReg}.

\begin{definition}[Weak solution]
We say that $u$ is a (nonnegative) {\em weak solution} to \eqref{1.u}--\eqref{1.bic} on $(0,T)$ if $u_i\ge 0$ a.e.\ in $\Omega_T=\Omega\times(0,T)$ and
\begin{align*}
  & u_i\in L^1(0,T;W^{1,1}(\Omega)), \quad 
  \pa_t u_i\in L^1(0,T;W^{1,\infty}(\Omega)'), \\
  & v_i=L_\eps(-\na p_i(u))\in L^2(\Omega_T;\R^d), 
  \quad u_iv_i\in L^1(\Omega_T;\R^d),
\end{align*}
it holds for all $\phi_i\in L^\infty(0,T;W^{1,\infty}(\Omega))$,
\begin{align*}
  \int_0^T\langle\pa_t u_i,\phi_i\rangle dt
  + \sigma_i\int_0^T\int_\Omega\na u_i\cdot\na\phi_i dxdt
  = \int_0^T\int_\Omega u_iv_i\cdot\na\phi_i dxdt,
\end{align*}
and the initial condition in \eqref{1.bic} is satisfied in the sense of $W^{1,\infty}(\Omega)'$. 
\end{definition}

\begin{definition}[Very weak solution]
We say that $u$ is a (nonnegative) {\em very weak solution} to \eqref{1.u}--\eqref{1.bic} on $(0,T)$ if $u_i\ge 0$ a.e.\ in $\Omega_T$ and
\begin{align*}
  & u_i\in L^{\beta}(\Omega_T), \quad 
  \pa_t u_i\in L^1(0,T;W^{2,\infty}(\Omega)'), \\
  & v_i=L_\eps(-\na p_i(u))\in L^2(\Omega_T;\R^d), 
  \quad u_iv_i\in L^1(\Omega_T;\R^d),
\end{align*}
it holds for all $\phi_i\in L^\infty(0,T;W^{2,\infty}(\Omega))$ satisfying $\na\phi_i\cdot\nu=0$ on $\pa\Omega$,
\begin{align*}
  \int_0^T\langle\pa_t u_i,\phi_i\rangle dt
  - \sigma_i\int_0^T\int_\Omega u_i\Delta\phi_i dxdt
  = \int_0^T\int_\Omega u_iv_i\cdot\na\phi_i dxdt,
\end{align*}
and the initial condition in \eqref{1.bic} is satisfied in the sense of $W^{2,\infty}(\Omega)'$. 
\end{definition}

We state now the existence results for the various ranges of $\beta$. We always assume that Assumptions (A1)--(A3) hold.

\begin{theorem}[Global existence for $\beta<1/d$]\label{thm.ex1}
Let $0<\beta<1/d$, $d\ge 1$. Then there exists a nonnegative weak solution $u=(u_1,\ldots,u_n)$ to \eqref{1.u}--\eqref{1.bic} satisfying 
\begin{equation}\label{1.reg1}
\begin{aligned}
  & u_i\in L^\infty(0,T;L^\infty(\Omega)), \quad 
  \pa_t u_i\in L^2(0,T;H^{1}(\Omega)'), \\
  & u_i^{\beta/2},\ u_i\in L^2(0,T;H^1(\Omega)), \quad
  v_i\in L^\infty(\Omega_T;\R^d),
\end{aligned}
\end{equation}
and the initial data are satisfied a.e.\ in $\Omega$. The solution $(u_i,v_i)$ is unique in the class of solutions satisfying \eqref{1.reg1}. 
\end{theorem}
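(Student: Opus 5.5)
The key observation is that for $\beta<1/d$ the velocity is bounded by mass conservation alone, without using the entropy. Since $u_i\ge0$ and $\int_\Omega u_i(t)dx=\int_\Omega u_i^0dx$, the function $u_j^\beta$ is bounded in $L^\infty(0,T;L^{1/\beta}(\Omega))$ with $1/\beta>d$. The Brinkman problem $-\eps\Delta v_i+v_i=-\na p_i(u)$, $v_i=0$ on $\pa\Omega$, has a right-hand side in divergence form. Elliptic $W^{1,q}$ regularity for the Dirichlet problem with data $\diver F$, $F\in L^q$ (Gröger), then gives
\[
\|v_i\|_{W^{1,1/\beta}(\Omega)}\le C\sum_j\|u_j^\beta\|_{L^{1/\beta}(\Omega)}\le C(u^0),
\]
and $W^{1,1/\beta}(\Omega)\hookrightarrow L^\infty(\Omega)$ because $1/\beta>d$. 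So $\|v_i\|_{L^\infty(\Omega_T)}\le M$, and $M$ depends only on the initial mass, $\eps$ and $\Omega$. This holds uniformly along any approximation that preserves mass and nonnegativity.

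For the construction I will use an implicit Euler scheme in time with step $\tau$, and a fixed-point (Leray--Schauder) argument at each step. The operator takes $\bar u$, computes $v_i=L_\eps(-\na p_i(\bar u^+))$, and solves the linear problem $(u_i-u_i^{k-1})/\tau-\sigma_i\Delta u_i+\diver(u_i v_i)=0$ with no-flux conditions. Testing with $u_i^-$ gives nonnegativity, and testing with $1$ gives mass conservation. The bound on $v_i$ then holds independently of the fixed-point iterate, which makes the Leray--Schauder estimate straightforward.

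Next I run an Alikakos/Moser iteration. Test the equation with $u_i^{p-1}$ and use Young's inequality on the drift term:
\[
\frac{d}{dt}\int u_i^p+\frac{2\sigma_i(p-1)}{p}\int|\na u_i^{p/2}|^2\le Cp^2M^2\int u_i^p .
\]
Combined with a Gagliardo--Nirenberg interpolation and the standard doubling $p_k=2^k$, this yields $\|u_i\|_{L^\infty}\le C(\|u^0\|_\infty,M,T)$. In the discrete setting I will use convexity of $s\mapsto s^p$ in place of the chain rule. Testing with $u_i$ then gives $u_i\in L^2(H^1)$ and $\pa_tu_i\in L^2(H^1{}')$. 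The entropy inequality \eqref{1.ei}, or a direct computation since $u_i$ is bounded, gives $u_i^{\beta/2}\in L^2(H^1)$.

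To pass $\tau\to0$, the Aubin--Lions lemma gives strong convergence of $u_i$ in $L^2(\Omega_T)$, hence of $u_j^\beta$ in $L^q$ for every $q<\infty$ by boundedness. Continuity of $L_\eps$ gives $v_i\to v$ strongly in $L^2$, weak* in $L^\infty$, so $u_iv_i$ converges.

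Uniqueness is the hardest step, because $s\mapsto s^\beta$ is not Lipschitz at $0$ when $\beta<1$. My first attempt is an $H^{-1}$-type or $L^1$/duality estimate on the difference $u-w$. The difficulty is controlling $\|v_i-\tilde v_i\|$ by $\|u^\beta-w^\beta\|$ in a negative norm, where $v_i,\tilde v_i$ are the velocities of $u,w$. For the $L^2$ estimate, test with $u_i-w_i$ and write
\[
u_iv_i-w_i\tilde v_i=(u_i-w_i)v_i+w_i(v_i-\tilde v_i).
\]
The first term is handled by $\|v_i\|_\infty$. For the second, $\|v_i-\tilde v_i\|_{L^2}\le C\|u^\beta-w^\beta\|_{L^2}$, which is only $\le C\|u-w\|^{\beta}$-type and not Lipschitz. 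I would try exploiting the monotonicity of $s\mapsto s^\beta$ together with the positive definiteness of $(a_{ij})$, using the test function $u_i^\beta-w_i^\beta$ or working with the entropy-type relative functional. This step may need an additional argument (e.g. positivity lower bounds on $u_i$ from $u_i^0>0$ via a minimum principle for the drift-diffusion equation with bounded $\diver v_i$). I expect it to be the main obstacle.
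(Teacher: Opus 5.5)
\paragraph{Existence.} Your existence argument follows the paper's route: implicit Euler with Leray--Schauder, mass conservation plus Gr\"oger's $W^{1,1/\beta}$ estimate giving $\|v_i\|_{L^\infty}\le M$ independently of the iterate, Alikakos iteration, Aubin--Lions, and identification of $v_i$. One technical caveat: with the unknown $u_i$ inside the drift, your linear problem is not coercive for arbitrary $\bar u$. The velocity bound for a non-fixed-point iterate grows with $\|\bar u\|_{L^2}$ while $\tau$ is fixed, so its solvability needs an extra argument. The paper sidesteps this by putting $(y_i)_+\widetilde v_i$ on the right-hand side.

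\paragraph{Uniqueness.} This is the genuine gap: you leave it open, and the routes you sketch do not close it as stated. An estimate that uses only the H\"older bound $|u^\beta-w^\beta|\le|u-w|^\beta$ (e.g.\ $\|v_i-\tilde v_i\|_{W^{1,1/\beta}(\Omega)}\le C\sum_j\|u_j-w_j\|_{L^1(\Omega)}^\beta$) ends in an inequality of the type $y'\le C(t)y^\beta$. For $\beta<1$ this does not force $y\equiv 0$ from $y(0)=0$. A minimum principle based on bounded $\diver v_i$ is not available, since you only know $v_i\in W^{1,1/\beta}$. And $u_i^0>0$ a.e.\ gives no uniform lower bound in any case.

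The paper does not supply the missing idea either. It calls $u\mapsto L_\eps(-\na p_i(u))$ linear and deduces $\|v_i-\bar v_i\|_{L^2(\Omega_T)}\le C\|u_i-\bar u_i\|_{L^2(\Omega_T)}$. That holds in the linear case $\beta=1$, but fails for $\beta<1$ for exactly the reason you identified.

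One way to close the step is to strengthen (A3) to $u_i^0\ge c_0>0$ a.e. Test with $(u_i+\mu)^{-p-1}$, use only $\|v_i\|_{L^\infty}\le M$, and let $\mu\to 0$; this gives
\[
  \frac{d}{dt}\int_\Omega u_i^{-p}dx + \frac{2(p+1)\sigma_i}{p}\int_\Omega|\na u_i^{-p/2}|^2dx \le \frac{p(p+1)M^2}{2\sigma_i}\int_\Omega u_i^{-p}dx.
\]
This is your Alikakos inequality for $1/u_i$, so $u_i\ge c>0$ in $\Omega_T$ for every solution in the class \eqref{1.reg1}. Then $s\mapsto s^\beta$ is Lipschitz on $[c,\infty)$, and $\|v_i-\bar v_i\|_{L^2(\Omega)}\le C\sum_j\|u_j-\bar u_j\|_{L^2(\Omega)}$. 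The $L^2$--Gronwall argument then closes once you sum over $i$, which is needed because the velocity difference couples all components. Under (A3) alone, uniqueness is established neither by your sketch nor by the paper's argument.
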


\begin{theorem}[Global existence for $1/2<\beta<1$]
\label{thm.ex2}
Let $1/2<\beta<1$, $d\leq2$, and let additionally Assumption (A4) be satisfied. Then there exists a nonnegative weak solution $u=(u_1,\ldots,u_n)$ to \eqref{1.u}--\eqref{1.bic} satisfying, with $q=\frac23(\beta+1)\ge 1$ and some $r>4$,
\begin{align*}
  & u_i\in L^\infty(0,T;L^1(\Omega))\cap 
  L^{\beta+1}(\Omega_T)\cap L^q(0,T;W^{1,q}(\Omega)), \quad 
  \pa_t u_i\in L^1(0,T;W^{1,\infty}(\Omega)'), \\
  & u_i^{\beta/2}\in L^2(0,T;H^1(\Omega)), \quad
  v_i\in L^{(\beta+1)/\beta}(0,T;L^\infty(\Omega;\R^d))
  \cap L^r(\Omega_T;\R^d). 
\end{align*}
\end{theorem}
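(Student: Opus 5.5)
We build an approximate solution by an implicit Euler scheme in time, regularize the elliptic problem in space, derive estimates from the discrete Tsallis entropy inequality, and pass to the limit $(\eta,\tau)\to0$ simultaneously.

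\textbf{Approximate problem.} Fix $\tau>0$, $\eta>0$ and $m>d/2$, $m>1$. Given $u^{k-1}$, we seek $u^k$ solving
\[
\frac{1}{\tau}(u_i^k-u_i^{k-1}) - \sigma_i\Delta u_i^k + \diver(u_i^kv_i^k)=0,
\qquad v_i^k=L_\eps^\eta(-\na p_i(u^k)),
\]
where $L_\eps^\eta$ is the solution operator of the problem in (A4). To handle the nonlinearity and positivity, we work in entropy variables $w_i=\frac{\beta}{\beta-1}u_i^{\beta-1}$, which is invertible for $u_i>0$ since $\beta<1$. We add a further $H^{m}$-regularization $\delta(\ldots)$ in $w$ and apply the Leray--Schauder fixed-point theorem; the removal of $\delta$ is standard. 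Testing with $w_i$, that is $\frac{\beta}{\beta-1}(u_i^k)^{\beta-1}$, gives the discrete analogue of \eqref{1.ei}. This uses convexity of $h$ for the time term, $\sigma_i\na u_i\cdot\na u_i^{\beta-1}\cdot\frac{\beta}{\beta-1}=\frac{4\sigma_i}{\beta}|\na u_i^{\beta/2}|^2$, and positive definiteness of $(a_{ij})$ for the drift term:
\[
\sum_i\int u_iv_i\cdot\na w_i=\sum_i\int v_i\cdot\na u_i^\beta=-\sum_{ij}a_{ij}\langle L^\eta_\eps\na u_j^\beta,\na u_i^\beta\rangle\le -\alpha\sum_i\|K^\eta_\eps\na u_i^\beta\|^2.
\]
Here $v_i\in H^m\hookrightarrow L^\infty$, so all products are defined at fixed $\eta$.

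\textbf{Uniform estimates.} Let $u^{(\tau)}$ denote the piecewise constant interpolant. Mass conservation and the entropy inequality give the following bounds, uniform in $(\eta,\tau)$: $u_i$ in $L^\infty(0,T;L^1)$, $u_i^{\beta/2}$ in $L^2(0,T;H^1)$, and $K^\eta_\eps\na u_i^\beta$ in $L^2$. A Gagliardo--Nirenberg argument in $d\le2$, using $u_i^{\beta/2}\in L^\infty(L^{2/\beta})\cap L^2(H^1)$, yields $u_i^{\beta/2}\in L^{2(1+2/\beta)/2\cdot\ldots}$, and in particular $u_i\in L^{\beta+1}(\Omega_T)$. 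Then $\na u_i=\frac{2}{\beta}u_i^{1-\beta/2}\na u_i^{\beta/2}$, and Hölder gives $\na u_i\in L^q$ with $q=2(\beta+1)/(\beta+1+2-\beta)\cdot\ldots=\frac23(\beta+1)$. For the velocities, Assumption (A4) with $f=p_i(u)\in L^{(\beta+1)/\beta}$ in space-time yields the following. First, $\|v_i\|_{W^{1,(\beta+1)/\beta}}\le C\|p_i(u)\|_{L^{(\beta+1)/\beta}}$ uniformly in $\eta$. Since $(\beta+1)/\beta>2\ge d$, this embeds into $L^\infty$, which gives $v_i\in L^{(\beta+1)/\beta}(0,T;L^\infty)$. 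Second, interpolating with the entropy bound $v_i\in L^2(\Omega_T)$ (via \eqref{2.Leps}), or more directly using $u^\beta\in L^{r/\ldots}$ from the higher integrability of $u^{\beta/2}$, gives $v_i\in L^r(\Omega_T)$ for some $r>4$. Here we need the precise exponents, and $\beta>1/2$ enters. Hence $u_iv_i$ is bounded in $L^s(\Omega_T)$ with $1/s=1/(\beta+1)+1/r<1$, because $r>4>(\beta+1)/\beta$. This gives the discrete time-derivative bound in $L^1(0,T;W^{1,\infty}(\Omega)')$.

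\textbf{Limit.} The Aubin--Lions lemma in the Dubinskii/discrete version (Dreher--Jüngel) applied to $u_i$, using the $L^q(W^{1,q})$ and time-difference bounds, gives strong convergence of $u_i$ in $L^1(\Omega_T)$. Interpolation with the $L^{\beta+1}$ bound then gives strong convergence in $L^{p}$ for all $p<\beta+1$, so $p_i(u)\to p_i(u)$ strongly in $L^{s'}$, $s'<(\beta+1)/\beta$. The velocities converge weakly in $L^r$. To identify the limit $v_i=L_\eps(-\na p_i(u))$, we test the $\eta$-problem with smooth $\phi$; the term $\eta\int D^\alpha v D^\alpha\phi$ vanishes because $\sqrt\eta\,\|v\|_{H^m}$ is bounded from the energy. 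The product $u_iv_i$ converges weakly since $u_i$ converges strongly in $L^{r'}$ (note $r'<4/3<\beta+1$) and $v_i$ converges weakly in $L^r$. Nonnegativity is preserved, and the initial datum is recovered in $W^{1,\infty}(\Omega)'$.

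\textbf{Main obstacle.} The delicate step is the integrability of $u_iv_i$ uniformly in $\eta$, i.e.\ getting $v_i\in L^r$ with $r>4$ independent of $\eta$. This is where (A4) and $d\le2$ are indispensable. We would first try to extract from the entropy bound that $u^\beta\in L^{\gamma}(\Omega_T)$ with $\gamma>2$ (available in $d=2$ when $\beta>1/2$ from $u^{\beta/2}\in L^{2+4/\beta\cdot\beta/\ldots}$). We would then feed this into (A4) with $p=\gamma$ plus the embedding $W^{1,\gamma}\hookrightarrow L^\infty$, and balance exponents so that the dual of $r$ is below $\beta+1$.
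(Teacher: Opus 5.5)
\textbf{There is a genuine gap in the velocity bound.} Your overall architecture matches the paper: Euler scheme, $L_\eps^\eta$-regularization, Tsallis entropy, Gagliardo--Nirenberg, (A4), Aubin--Lions, and removal of the $\eta$-term via $\sqrt{\eta}\|v_i\|_{H^m}$. Your exponents $u_i\in L^{\beta+1}(\Omega_T)$, $\na u_i\in L^q$ with $q=\frac23(\beta+1)$, and $v_i\in L^{(\beta+1)/\beta}(0,T;L^\infty(\Omega))$ are also correct. But the step you flag as the main obstacle, $v_i\in L^r(\Omega_T)$ with $r>4$, is not closed, and your suggested routes cannot close it. This bound is part of the statement and is what you use to pass to the limit in $u_iv_i$.

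Interpolating $L^{(\beta+1)/\beta}(0,T;L^\infty(\Omega))$ with the entropy bound in $L^2(\Omega_T)$ only yields space-time exponents between $2$ and $(\beta+1)/\beta<3$. Feeding $u_i^\beta\in L^{(\beta+1)/\beta}(\Omega_T)$ into (A4), which is your ``main obstacle'' plan, cannot give time integrability of $v_i$ beyond $(\beta+1)/\beta$, since $v_i(t)$ is determined by $u(t)$ alone.

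The missing ingredient is an $L^\infty$-in-time bound coming from mass conservation. Since $u_i^\beta\in L^\infty(0,T;L^{1/\beta}(\Omega))$, Assumption (A4) with $p=1/\beta\in(1,2)$ gives
\[
v_i\in L^\infty(0,T;W^{1,1/\beta}(\Omega))\hookrightarrow L^\infty(0,T;L^{s}(\Omega)),\quad s=\frac{2}{2\beta-1}\quad(d=2).
\]
This is where $\beta>1/2$ is used. Then
\[
\int_0^T\|v_i\|_{L^r(\Omega)}^r dt\le\|v_i\|_{L^\infty(0,T;L^s(\Omega))}^s\int_0^T\|v_i\|_{L^\infty(\Omega)}^{r-s}dt,
\]
which is finite for $r-s=(\beta+1)/\beta$, i.e.\ $r=(\beta+1)/\beta+2/(2\beta-1)>4$. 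Without this, $v_i$ is controlled only at the exponent $(\beta+1)/\beta$ conjugate to $\beta+1$, while $u_i$ converges strongly only in $L^\rho$ for $\rho<\beta+1$. So neither $u_iv_i\in L^s(\Omega_T)$ with $s>1$ nor your limit argument via $u_i\to u_i$ in $L^{r'}$ follows as written.

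\textbf{The construction of approximate solutions is also flawed.} Your entropy variable $w_i=\frac{\beta}{\beta-1}u_i^{\beta-1}$ maps $(0,\infty)$ only onto $(-\infty,0)$. Likewise, $h'(u_i)=(1-\beta u_i^{\beta-1})/(1-\beta)$ maps only onto $(-\infty,1/(1-\beta))$. A Leray--Schauder argument in entropy variables produces arbitrary $w\in H^m(\Omega;\R^n)\subset L^\infty$, for which $u=(h')^{-1}(w)$ need not exist. Injectivity is not enough; you need $h'$ to be onto $\R^n$, so this step is not ``standard''. One could try to repair it by adding a small Boltzmann part $\kappa u_i(\log u_i-1)$ to the entropy. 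The extra drift term $\kappa\int_\Omega v_i\cdot\na u_i$ would then be controlled by the $\eta$-dependent $L^\infty$ bound on $v_i$ and the resulting Fisher information.

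The paper avoids entropy variables altogether. At fixed $\eta$, mass conservation alone bounds $v_i$ in $H^m(\Omega)\hookrightarrow L^\infty(\Omega)$. A direct fixed point in $u$ therefore works, using the truncation $(y_i)_+$, the test function $(u_i)_-$ for nonnegativity, and an Alikakos bound. The entropy inequality is then derived a posteriori with the test function $1-\beta(u_i^k+\mu)^{\beta-1}$, letting $\mu\to0$. This limit is justified by first showing $\sqrt{u_i^k}\in H^1(\Omega)$ via the test function $\log(u_i^k+\mu)$. Hence $\na(u_i^k)^\beta=2\beta(u_i^k)^{\beta-1/2}\na\sqrt{u_i^k}\in L^2(\Omega)$ for $\beta\ge1/2$.
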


The case $\beta=1/2$, $d=2$ is discussed in Remark \ref{rem.12} below.

\begin{theorem}[Global existence for $2d/(d+2)<\beta<2$]\label{thm.ex3}
Let $\max\{1,2d/(d+2)\}<\beta<2$, $d\ge 1$. Then there exists a nonnegative weak solution $u=(u_1,\ldots,u_n)$ to \eqref{1.u}--\eqref{1.bic} satisfying, with $q=\beta(d+2)/(\beta+d)\ge 1+d/(d+4)$,
\begin{align*}
  & u_i\in L^\infty(0,T;L^\beta(\Omega))\cap
  L^{\beta(d+2)/d}(\Omega_T)\cap L^q(0,T;W^{1,q}(\Omega)), \quad 
  \pa_t u_i\in L^1(0,T;W^{1,\infty}(\Omega)'), \\
  & u_i^{\beta/2}\in L^2(0,T;H^1(\Omega)), \quad
  v_i\in L^2(\Omega_T;\R^d).
\end{align*}
\end{theorem}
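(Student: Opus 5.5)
We construct approximate solutions by an implicit Euler scheme in time with step $\tau>0$, a higher-order regularization $L_\eps^\eta$ of the Brinkman operator on $H^m(\Omega;\R^d)'$ with $m>d/2$, and additional regularization terms in the elliptic problem for the entropy variable $w_i=\frac{\beta}{\beta-1}u_i^{\beta-1}$. At each time step we solve, for given $u^{k-1}$, the problem
\begin{align*}
  \frac1\tau\int_\Omega(u_i^k-u_i^{k-1})\phi_i\,dx
  + \sigma_i\int_\Omega\na u_i^k\cdot\na\phi_i\,dx
  - \int_\Omega u_i^kv_i^k\cdot\na\phi_i\,dx
  + \delta\langle w_i^k,\phi_i\rangle_{H^m}=0,
\end{align*}
with $v_i^k=L_\eps^\eta(-\na p_i(u^k))$ and $u_i^k=(\frac{\beta-1}{\beta}w_i^k)^{1/(\beta-1)}$. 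Since $\beta>1$, the relation between $u$ and $w$ requires $w>0$. We would therefore use the Leray--Schauder theorem with a fixed-point map on $w\in L^\infty$, where the $\delta$-regularization provides $H^m\hookrightarrow L^\infty$ compactness. Positivity of $u$ follows from the inverse map. We keep the truncation $S_N^\beta$ only as a safeguard, since for $\beta<2$ we have $u^\beta\in L^1$ directly.

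The a priori estimate is the discrete version of \eqref{1.ei}. We test with $\phi_i=w_i^k$ and use convexity of $h$ to get $\int(u^k-u^{k-1})w^k\ge H(u^k)-H(u^{k-1})$. The diffusion term gives $\frac4\beta\sigma_i|\na (u_i^k)^{\beta/2}|^2$. The drift term is $-\int u_iv_i\cdot\na w_i=-\int v_i\cdot\na (u_i^\beta)$. Summed over $i$ and using the symmetric square root $K_\eps^\eta$ together with positive definiteness of $(a_{ij})$, this yields $\alpha\sum_i\|K_\eps^\eta\na u_i^\beta\|^2$. This requires $(a_{ij})$ to be used in symmetric form: the quadratic form $\sum_{ij}a_{ij}\langle K\na u_i^\beta,K\na u_j^\beta\rangle$ only sees the symmetric part, which is what positive definiteness controls. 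From this we obtain the uniform bounds:
\begin{itemize}
\item $u_i\in L^\infty(0,T;L^\beta)$ and $u_i^{\beta/2}\in L^2(0,T;H^1)$;
\item $\sqrt\delta\, w$ in $L^2(0,T;H^m)$;
\item $v_i$ in $L^2(\Omega_T)$, uniformly in $\eta$, via \eqref{2.Leps}.
\end{itemize}
Gagliardo--Nirenberg interpolation between $L^\infty(L^1)$ for $u^{\beta/2}$ in $L^{2/\beta}$ and $L^2(H^1)$ gives $u_i\in L^{\beta(d+2)/d}(\Omega_T)$. Then $\na u_i=\frac2\beta u_i^{1-\beta/2}\na u_i^{\beta/2}$, and Hölder's inequality with exponents matched to $L^{\beta(d+2)/d}$ gives $\na u_i\in L^q$ with $q=\beta(d+2)/(\beta+d)$. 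Here $\beta<2$ makes $1-\beta/2>0$, and $q\ge1$ precisely because $\beta\ge 2d/(d+2)$ up to the strict inequality. A bound on $\pa_t u_i$ in $L^1(W^{1,\infty})'$ follows once $u_iv_i\in L^1$. The product $u_iv_i$ is controlled by Hölder's inequality since $\beta(d+2)/d>2$ iff $\beta>2d/(d+2)$. This is exactly where the hypothesis enters, and it even gives $u_iv_i\in L^{s}$ with some $s>1$.

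For the limits $(\delta,\eta,\tau)\to0$, we apply the Aubin--Lions lemma in the discrete form of Dreher--Jüngel to $u_i$, using the $L^q(W^{1,q})$ bound and the dual time bound. This gives strong convergence of $u_i$ in $L^q(\Omega_T)$ and, by interpolation with the higher integrability, in $L^\rho$ for every $\rho<\beta(d+2)/d$. In particular $u_i^\beta\to u_i^\beta$ strongly in $L^1$, and $u_i^\beta$ is even bounded in $L^{(d+2)/d}$. Hence $\na p_i(u)$ converges in $W^{-1,\cdot}$, and $v_i\rightharpoonup v_i$ weakly in $L^2$. We identify $v_i=L_\eps(-\na p_i(u))$ by passing to the limit in the weak formulation of the regularized Brinkman problem, where the $\eta$-terms vanish by the $\sqrt\eta\,\|v\|_{H^m}$ bound. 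The drift term $u_iv_i$ is then a product of a strongly convergent sequence in $L^{\rho}$, with $\rho>2$ permitted by the hypothesis, and a weakly convergent sequence in $L^2$, so it converges weakly in $L^1$. The $\delta$-term vanishes by $\sqrt\delta$ scaling.

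The main obstacle I anticipate is the solvability and positivity at the approximate level. This includes making the entropy test function $w_i$ admissible when $u$ could vanish; we would handle it via the $w$-formulation, possibly adding a term $\tau^{-1}\delta u$ to keep $w$ bounded below. A second difficulty is ensuring that the regularized operator $L_\eps^\eta$ retains a symmetric square root, so that the cross term has a sign uniformly in $\eta$.
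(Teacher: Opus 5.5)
Your a priori estimates and limit passage are essentially the paper's: the discrete Tsallis entropy inequality, Gagliardo--Nirenberg, $\na u_i=\frac{2}{\beta}u_i^{1-\beta/2}\na u_i^{\beta/2}$, H\"older for $u_iv_i$ using $\beta(d+2)/d>2$, the Dreher--J\"ungel version of Aubin--Lions, and identification of $v_i$ via the $\sqrt{\eta}$-bound. The gap is the step you flag yourself: you never construct the approximate solutions to which all of this is applied. For $\beta>1$, the map $u\mapsto w=\frac{\beta}{\beta-1}u^{\beta-1}$ sends $(0,\infty)$ onto $(0,\infty)$, not onto $\R$. The boundedness-by-entropy mechanism you rely on (``positivity follows from the inverse map'') needs the entropy derivative to be onto $\R$. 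The Lax--Milgram solution $w\in H^m(\Omega)$ of the linearized problem need not be positive: the higher-order regularized operator has no maximum principle, and the right-hand side has no sign. So $u=(\frac{\beta-1}{\beta}w)^{1/(\beta-1)}$, and with it your fixed-point map, is undefined. Adding a term $\tau^{-1}\delta u$ gives no lower bound for $w$. As it stands, the proposal produces no approximate solutions satisfying the discrete entropy inequality.

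The paper avoids entropy variables. Only the Brinkman operator is regularized, so that $v_i^k=L_\eps^\eta(-\na p_i(u^k))\in H^m(\Omega;\R^d)\hookrightarrow L^\infty$. The Leray--Schauder argument is then done for $u$ itself, via the linear second-order problem \eqref{3.lin1} (with $(y_i)_+$ in the drift and zeroth-order terms for coercivity). Nonnegativity comes from testing with $(u_i)_-$ and using $(u_i)_+\na(u_i)_-=0$. The discrete entropy inequality is derived afterwards with the admissible test function $\beta(u_i^k+\mu)^{\beta-1}-1$, letting $\mu\to0$.

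Alternatively, your scheme can be repaired by setting $u=g(w):=(\frac{\beta-1}{\beta}w_+)^{1/(\beta-1)}$ for all $w\in\R$, with $h(u)=u^\beta/(\beta-1)$. For $1<\beta<2$, $g$ is $C^1$ and nondecreasing, and $\frac{d}{dw}g(w)^\beta=g(w)$, so $u\na w=\na u^\beta$ still holds. Moreover, $(g(w)-u_0)w\ge h(g(w))-h(u_0)$ for all $w\in\R$ and $u_0\ge 0$. Hence the entropy computation goes through and $u\ge0$ is automatic.

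There are two smaller errors. First, the Gagliardo--Nirenberg step must use $u_i^{\beta/2}\in L^\infty(0,T;L^2(\Omega))$, i.e.\ the entropy bound in $L^\infty(0,T;L^\beta(\Omega))$. The mass bound $u_i^{\beta/2}\in L^\infty(0,T;L^{2/\beta}(\Omega))$ that you cite only gives $u_i\in L^{\beta+2/d}(\Omega_T)$. That misses $\rho>2$ for, e.g., $d=3$ and $6/5<\beta\le 4/3$. Second, $q\ge1$ is equivalent to $\beta\ge d/(d+1)$, which is automatic for $\beta>1$. The hypothesis $\beta>2d/(d+2)$ is needed only for the integrability of $u_iv_i$, and for $d\ge 2$ it is what yields $q>1+d/(d+4)$.
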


\begin{theorem}[Global existence for $\beta\ge 2$]\label{thm.ex4}
Let $\beta\ge 2$, $d\ge 1$. Then there exists a nonnegative very weak solution $u=(u_1,\ldots,u_n)$ to \eqref{1.u}--\eqref{1.bic} satisfying 
\begin{align*}
  & u_i\in L^\infty(0,T;L^\beta(\Omega))\cap
  L^{\beta(d+2)/d}(\Omega_T), \quad 
  \pa_t u_i\in L^1(0,T;W^{2,\infty}(\Omega)'), \\
  & u_i^{\beta/2}\in L^2(0,T;H^1(\Omega)), \quad
  v_i\in L^2(\Omega_T;\R^d).
\end{align*}
\end{theorem}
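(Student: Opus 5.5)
We construct approximate solutions with three regularizations, derive estimates uniform in all parameters from a discrete version of the Tsallis entropy inequality \eqref{1.ei}, and remove the regularizations in turn.

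\textbf{Approximate problem.} For $\tau>0$, $\eta>0$ and $N>0$ we solve an implicit Euler step. Given $u^{k-1}$, we seek $u^k$ such that
\begin{equation*}
  \frac{1}{\tau}\int_\Omega(u_i^k-u_i^{k-1})\phi_i\,dx
  +\sigma_i\int_\Omega\na u_i^k\cdot\na\phi_i\,dx
  =\int_\Omega u_i^kv_i^k\cdot\na\phi_i\,dx
  + \text{(small regularizing terms)}.
\end{equation*}
The velocity is $v_i^k=L_\eps^\eta\big(-\na\sum_j a_{ij}S_N^\beta(u_j^k)\big)$, where $L_\eps^\eta$ is the $H^m$-regularized Brinkman operator with $m>d/2$. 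Since $S_N^\beta$ is bounded, $v_i^k\in H^m\hookrightarrow L^\infty$, and we may work in the entropy variables $w_i=h'(u_i)=(\beta u_i^{\beta-1}-1)/(\beta-1)$. We then apply the Leray--Schauder fixed-point theorem, which also gives positivity, since $u_i=h'^{-1}(w_i)>0$.

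\textbf{Uniform estimates.} The main algebraic point is the choice of the truncation. We take $S_N^\beta$ so that testing the convective term with $\na h'(u_i)$ reproduces $\na S_N^\beta(u_i)$, for instance by defining the truncation at the level of $u_i\,h''(u_i)$. The cross term then equals
\begin{equation*}
  \sum_{i,j}a_{ij}\int_\Omega L_\eps^\eta\big(\na S_N^\beta(u_j)\big)\cdot\na S_N^\beta(u_i)\,dx
  \ \ge\ \alpha\sum_i\|K_\eps^\eta\na S_N^\beta(u_i)\|^2\ \ge 0 .
\end{equation*}
Combined with convexity of $h$ in the discrete time derivative, this yields the discrete analogue of \eqref{1.ei}, with bounds uniform in $(\tau,\eta,N)$:
\begin{itemize}
\item $u_i$ in $L^\infty(0,T;L^\beta(\Omega))$;
\item $\na u_i^{\beta/2}$ in $L^2(\Omega_T)$;
\item $v_i$ in $L^2(\Omega_T)$, using \eqref{2.Leps} together with the $\eta$-version of the square-root identity.
\end{itemize}
The Gagliardo--Nirenberg inequality applied to $u_i^{\beta/2}\in L^\infty(L^2)\cap L^2(H^1)$ then gives $u_i^{\beta/2}\in L^{2(d+2)/d}(\Omega_T)$, that is, $u_i\in L^{\beta(d+2)/d}(\Omega_T)$. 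For $\beta\ge2$ this exponent exceeds $2$, so $u_iv_i$ is bounded in $L^{s}(\Omega_T)$ for some $s>1$ by H\"older's inequality. Consequently, in the very weak formulation, where $\sigma_i u_i\Delta\phi_i$ requires only $u_i\in L^1$, the discrete time derivative is bounded in $L^1(0,T;W^{2,\infty}(\Omega)')$, and even in $L^s(0,T;W^{2,\infty}(\Omega)')$.

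\textbf{Compactness and limits.} For strong convergence we apply the Aubin--Lions lemma in the nonlinear, discrete form of Dreher--J\"ungel or Chen--J\"ungel--Liu to $u_i^{\beta/2}$. It is bounded in $L^2(H^1)$, while $u_i$ has the time-derivative bound above. Since the map $u\mapsto u^{\beta/2}$ is Lipschitz-compatible for $\beta\ge2$, this gives $u_i\to u_i$ a.e., and hence strongly in $L^\rho(\Omega_T)$ for every $\rho<\beta(d+2)/d$ by Vitali's theorem. In particular $S_N^\beta(u_i^{(N)})\to u_i^\beta$ strongly in $L^{\rho/\beta}$ with $\rho/\beta>1$.

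For the velocity, $v_i\rightharpoonup v$ weakly in $L^2$. We identify $v=L_\eps(-\na p_i(u))$ by passing to the limit in the weak Brinkman equation with smooth test functions: the term $\eta\int D^\alpha v\,D^\alpha\phi$ is $O(\eta^{1/2})$ thanks to the $\eta$-weighted bound from the entropy/energy estimate, and the right-hand side converges by the strong convergence of $S_N^\beta(u)$. The product $u_iv_i$ converges weakly in $L^1$ as a strong$\times$weak product, since $u_i\to u_i$ strongly in $L^2$, which is possible because $\beta(d+2)/d>2$. The regularizing terms vanish, and the initial datum is recovered in $W^{2,\infty}(\Omega)'$ from the time-derivative bound.

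\textbf{Main obstacle.} We expect the hardest step to be making the entropy computation rigorous with both the truncation $S_N^\beta$ and the operator $L_\eps^\eta$. The truncation must be chosen compatibly with $h'$ so that the cross-diffusion term stays nonnegative, and the $L^2$ bound on $v_i$ must be uniform in $\eta$. Our first attempt would be to define $S_N^\beta$ through $\na S_N^\beta(u)=u\,\na h'_N(u)$, where $h_N$ is a truncated Tsallis entropy, and to use $h_N$ throughout the scheme.
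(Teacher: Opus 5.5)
Your overall architecture matches the paper's: truncation $S_N^\beta$, the operator $L_\eps^\eta$, implicit Euler, a discrete Tsallis entropy inequality whose cross term is nonnegative by positive definiteness, Gagliardo--Nirenberg, the very weak formulation, the Chen--J\"ungel--Liu lemma, and identification of $v_i$ with an $O(\sqrt\eta)$ error. Your truncation, with untruncated mobility and $(S_N^\beta)'(u)=u\,h_N''(u)$, is a legitimate alternative to the paper's truncated mobility $(u_i^k)_+^N$. Note, though, that this $S_N^\beta$ grows quadratically; it is not bounded.

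However, the construction of approximate solutions fails as stated. For $\beta>1$, $h'(u)=(\beta u^{\beta-1}-1)/(\beta-1)$ maps $(0,\infty)$ only onto $(-1/(\beta-1),\infty)$, and truncating $h$ at large $u$ does not change this near $u=0$. Hence $u=(h')^{-1}(w)$ is undefined for $w\le -1/(\beta-1)$, and a Leray--Schauder map on entropy variables $w\in H^m(\Omega)\hookrightarrow L^\infty(\Omega)$ is not well defined. Positivity also cannot be read off from $u=(h')^{-1}(w)>0$; extending $(h')^{-1}$ by zero would give at most nonnegativity and needs its own verification. The paper avoids entropy variables entirely. It applies Leray--Schauder directly in $u$ through the linear problem \eqref{3.lin4} and obtains $u_i^k\ge 0$ by testing with $(u_i)_-$. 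The convective term vanishes there since $(u_i)_+^N\na(u_i)_-=0$; with your mobility you would have to use $(y_i)_+$. It then exploits that for $\beta\ge 2$ the function $\beta S_N^{\beta-1}(u_i^k)-1$ already lies in $H^1(\Omega)$, so it is an admissible test function.

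Second, the bounds you list as uniform in $(\tau,\eta,N)$ are not what the truncated inequality gives. Uniformly in $N$ one controls only $\int_\Omega R_N^\beta(u_i)dx$ and $\na S_N^{\beta/2}(u_i)$, not $\|u_i\|_{L^\beta(\Omega)}$ and $\na u_i^{\beta/2}$; note $R_N^\beta$ is quadratic above $N$ and satisfies $R_N^\beta(z)\le z^\beta$ for $\beta\ge 2$. So three steps are unjustified while $N$ is still present: the bound in $L^{\beta(d+2)/d}(\Omega_T)$, the nonlinear Aubin--Lions lemma applied to $u_i^{\beta/2}$, and the convergence $S_N^\beta(u_i^{(N)})\to u_i^\beta$.

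What is missing is the paper's intermediate limit $N\to\infty$ at fixed $(\eta,\tau)$. The one-step inequality bounds $K_\eps^\eta(\na S_N^\beta(u^k))$ in $L^2(\Omega)$, so by \eqref{2.KLeta} $v_i^k$ is bounded in $H^m(\Omega)\hookrightarrow L^\infty(\Omega)$ independently of $N$. An Alikakos iteration then gives an $L^\infty(\Omega)$ bound for $u_i^k$ depending on $\eta$ and $\tau$ but not on $N$. From this one gets an $H^1(\Omega)$ bound, strong convergence, $S_N^\beta(u_i^{(N)})\to u_i^\beta$ in every $L^p(\Omega)$, and the untruncated inequality \eqref{3.aei4} by weak lower semicontinuity. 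Only after this step do your $(\eta,\tau)$-uniform estimates and the remaining limit arguments, which coincide with the paper's, go through.
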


Finally, we state the result for the singular limit $\eps\to 0$.

\begin{theorem}[Localization limit]\label{thm.loc}
Let $(u^{(\eps)},v^{(\eps)})$ be a nonnegative weak or very weak solution to \eqref{1.u}--\eqref{1.bic} satisfying the entropy inequality \eqref{1.ei}. Let
\begin{align*}
  0<\beta<1 \mbox{ if }d=1, \quad 1<\beta<2 \mbox{ if }d\leq 2, \quad
  \mbox{or}\quad\beta\ge 2\mbox{ if }d\ge 1.
\end{align*} Then there exists a subsequence (not relabeled) such that, for some $\rho>2$,
\begin{align*}
  u_i^{(\eps)}\to u_i\quad\mbox{strongly in }L^\rho(\Omega_T), \quad
  v_i^{(\eps)}\rightharpoonup v_i \quad\mbox{weakly in }L^2(\Omega_T),
\end{align*}
and $(u,v)$ solves
\begin{align*}
  \pa_t u_i - \diver(\sigma_i \na u_i + u_i\na p_i(u)) = 0
  \quad\mbox{in }\Omega,\ t>0,\ i=1,\ldots,n,
\end{align*}
and $u_i(0)=u_i^0$ in the sense of $W^{2,\infty}(\Omega)'$.
\end{theorem}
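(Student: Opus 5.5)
The proof rests only on the $\eps$-uniform bounds contained in the entropy inequality \eqref{1.ei}, followed by a compactness argument. Integrating \eqref{1.ei} in time and using the mass conservation $\int_\Omega u_i^{(\eps)}(t)dx=\int_\Omega u_i^0dx$, we obtain uniformly in $\eps$: a bound for $u_i^{(\eps)}$ in $L^\infty(0,T;L^{\max\{1,\beta\}}(\Omega))$, for $\na(u_i^{(\eps)})^{\beta/2}$ in $L^2(\Omega_T)$, and for $K_\eps(\na (u_i^{(\eps)})^\beta)$ in $L^2(\Omega_T)$. For $\beta<1$ we bound $H(u(0))$ and $H(u(t))$ by mass conservation, since $-u_i/(\beta-1)\ge 0$.

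The velocity bound comes from testing \eqref{1.v} with $v_i$. This gives $\eps\|\na v_i\|_2^2+\|v_i\|_2^2=-\langle\na p_i(u),v_i\rangle$, and by the definition of $K_\eps$ one has $\|v_i\|_{L^2}\le\|K_\eps(\na p_i(u))\|_{L^2}\le C\sum_j\|K_\eps(\na u_j^\beta)\|_{L^2}$. This is the estimate \eqref{2.Leps} referred to earlier. Hence $v_i^{(\eps)}$ is bounded in $L^2(\Omega_T)$ uniformly in $\eps$, and a subsequence converges weakly to some $v_i$.

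Next we derive the integrability $\rho>2$ by Gagliardo--Nirenberg. We write $w=(u_i^{(\eps)})^{\beta/2}\in L^\infty(0,T;L^{2\max\{1,\beta\}/\beta})\cap L^2(0,T;H^1)$.
\begin{itemize}
\item For $\beta>1$ this gives $u_i\in L^{\beta(d+2)/d}(\Omega_T)$, and $\beta(d+2)/d>2$ holds if $d\le2$ and $\beta>1$, or if $\beta\ge2$ and $d\ge 1$.
\item For $\beta<1$ we use the $L^\infty(L^1)$ bound. Then $w\in L^\infty(L^{2/\beta})\cap L^2(H^1)$ gives $u_i\in L^{\beta+2/d}(\Omega_T)$, which exceeds $2$ when $d=1$.
\end{itemize}
So $u_i^{(\eps)}$ is bounded in $L^{\rho_0}(\Omega_T)$ for some $\rho_0>2$ in every listed case.

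Strong compactness comes from Aubin--Lions in the nonlinear version (Dubinskii, or the Chen--Jüngel--Liu variant) applied to $(u_i^{(\eps)})^{\beta/2}$, bounded in $L^2(H^1)$. For this we need a time-derivative bound. We have $\pa_t u_i=\sigma_i\Delta u_i-\diver(u_iv_i)$, with $u_iv_i$ bounded in $L^1(\Omega_T)$ because $u_i\in L^2$ and $v_i\in L^2$. Hence $\pa_t u_i$ is bounded in $L^1(0,T;W^{2,\infty}(\Omega)')$. This yields a.e. convergence of $u_i^{(\eps)}$, and Vitali's theorem then upgrades it to strong convergence in $L^\rho(\Omega_T)$ for every $2<\rho<\rho_0$.

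The main obstacle is identifying the limits $v_i=-\na p_i(u)$ and $u_iv_i^{(\eps)}\rightharpoonup u_iv_i$. The product converges weakly in $L^1$, since $u_i^{(\eps)}\to u_i$ strongly in $L^2$ and $v_i^{(\eps)}\rightharpoonup v_i$ weakly in $L^2$. To identify $v_i$, we test \eqref{1.v} with a smooth compactly supported $\psi$:
\[
-\eps\int v_i^{(\eps)}\cdot\Delta\psi+\int v_i^{(\eps)}\cdot\psi=\int p_i(u^{(\eps)})\diver\psi .
\]
The first term is $O(\eps)$ because of the $L^2$ bound. In the last term, $u_j^\beta$ converges strongly in $L^{\rho/\beta}$ (if $\beta<\rho$) or at least in $L^1$ by Vitali, since $\beta(d+2)/d>\beta$. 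Therefore $v_i=-\na p_i(u)$ in distributions, and this lies in $L^2$, so $u_i\na p_i(u)\in L^1$ makes sense. Finally, we pass to the limit in the very weak formulation: the diffusion term $u_i\Delta\phi_i$ converges by strong convergence, and the initial datum is recovered in $W^{2,\infty}(\Omega)'$ from the uniform bound on $\pa_t u_i$ together with equicontinuity.
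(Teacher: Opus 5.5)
Your argument is correct in outline, but it is organized differently from the paper. The paper splits into cases. For $d=1$, $0<\beta<1$ and for $1<\beta<2$, $d\le 2$, it writes $\na u_i^{(\eps)}=(2/\beta)(u_i^{(\eps)})^{1-\beta/2}\na(u_i^{(\eps)})^{\beta/2}$. Combined with the $L^{\rho_0}(\Omega_T)$ bound, this gives $\na u_i^{(\eps)}$ bounded in $L^r(\Omega_T)$ with $r>1$: namely $r=(\beta+2)/2$ in the first case and $r=\beta(d+2)/(\beta+d)$ in the second. The paper then bounds $\pa_t u_i^{(\eps)}$ in the reflexive space $L^s(0,T;W^{1,s'}(\Omega)')$, applies the classical Aubin--Lions lemma, and passes to the limit in the weak formulation. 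Only for $\beta\ge 2$, where no gradient bound on $u_i^{(\eps)}$ is available, does it switch to the very weak formulation with $-\int u_i\Delta\phi_i$.

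You instead run a single nonlinear (Dubinskii-type) compactness argument on $(u_i^{(\eps)})^{\beta/2}$ and use the very weak formulation in every case. Your version is shorter and makes explicit where compactness comes from when $\beta\ge 2$. There the paper's ``proceed as in the previous subsection'' implicitly relies on exactly such a nonlinear lemma, as in Section~\ref{sec.beta4}. The paper's version gives more for $\beta<2$: a genuine weak solution with $\na u_i\in L^r(\Omega_T)$, and the initial datum in the stronger sense of $W^{1,s'}(\Omega)'$, using only the classical lemma.

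Two of your steps need repair.

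First, Dubinskii's lemma and the Chen--J\"ungel--Liu variant, as usually stated, use the seminorm $[u]=\|u^{\beta/2}\|_{H^1(\Omega)}^{2/\beta}$. They require $[u]$ to be bounded in $L^p(0,T)$ with $p\ge 1$. Your $L^2(0,T;H^1(\Omega))$ bound gives this only for $p\le\beta$. So for $d=1$, $0<\beta<1$, the lemma does not apply as you cite it. In that case use the paper's bound $\na u_i^{(\eps)}\in L^{(\beta+2)/2}(\Omega_T)$ together with the classical Aubin--Lions lemma.

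Second, a bound on $\pa_t u_i^{(\eps)}$ in $L^1(0,T;W^{2,\infty}(\Omega)')$ does not give equicontinuity in time, so your recovery of the initial datum does not follow as written. The fix is already in your hands: since $\rho_0>2$, the product $u_i^{(\eps)}v_i^{(\eps)}$ is bounded in $L^s(\Omega_T)$ with $s=2\rho_0/(\rho_0+2)>1$. Hence $\pa_t u_i^{(\eps)}$ is bounded in $L^s(0,T;W^{2,\infty}(\Omega)')$, which gives H\"older equicontinuity and the initial condition in $W^{2,\infty}(\Omega)'$. Alternatively, keep the term $\int_\Omega u_i^0\phi_i(0)dx$ in the weak formulation, with test functions vanishing at $t=T$, and pass to the limit there.
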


\begin{remark}[Case $\beta=1/2$]\label{rem.12}\rm
The case $\beta=1/2$, $d=2$ is surprisingly delicate, since we obtain uniform bounds for $|\na u_i|$ only in $L^1(\Omega_T)$ and for $\pa_t u_i$ only in $L^1(0,T;W^{1,\infty}(\Omega)')$, which are non-reflexive spaces. Still, we can apply the Aubin--Lions lemma to conclude the a.e.\ convergence of an approximate sequence of $u_i$, and it is possible to prove the existence of an integrable solution in the sense
\begin{align*}
  -\int_0^T\int_\Omega u_i\pa_t\phi_i dxdt 
  - \int_\Omega u_i^0\phi_i(0)dx
  - \sigma_i\int_0^T\int_\Omega u_i\Delta\phi_i dxdt
  = \int_0^T\int_\Omega u_i v_i\cdot\na\phi_i dxdt
\end{align*}
for suitable test functions $\phi_i$; see Section \ref{sec.12}. 
\end{remark}

\begin{remark}[Reaction rates]\rm
The original model of Grindrod \cite{Gri88} contains reaction terms. In the model of  \cite{JVZ24}, Lotka--Volterra terms of the form $u_if_i(u)$ have been included, yielding additional bounds. Indeed, let $f_i(u)=b_{i0}-\sum_{j=1}^n b_{ij}u_j$ with $b_{i0}\ge 0$, $b_{ij}\ge 0$ for $i\neq j$, and $b_{ii}>0$ for $i,j=1,\ldots,n$. Using the test function $1-\beta u_i^{\beta-1}$ (if $\beta<1$) or $\beta u_i^{\beta-1}-1$ (if $\beta>1$) in the weak formulation of \eqref{1.u}, we can estimate
\begin{align*}
  u_if_i(u) \le -C_0u_i^{\max\{1,\beta\}+1} + C_1,
\end{align*}
where $C_0$, $C_1>0$. This yields an improved bound for $u_i$ in $L^\infty(0,T;L^2(\Omega))$ instead of $L^\infty(0,T;L^1(\Omega))$ (if $\beta<1$) and in $L^\infty(0,T;L^{\beta+1}(\Omega))$ instead of $L^\infty(0,T;L^\beta(\Omega))$ (if $\beta>1$). In particular, we can improve the existence results allowing for $1<\beta<2$, since the reaction rates yield an estimate for $u_i$ in $L^r(\Omega_T)$ with $r=\beta+1>2$. Our results can be extended to such a situation, though deriving the approximate entropy inequality is more complex. We have chosen to omit the reaction rates in order to focus on the nonlocal and diffusive effects. 
\end{remark}

The paper is organized as follows. Some auxiliary results on the operators $L_\eps$ and $L_\eps^\eta$ are given in Section \ref{sec.prep}. Sections \ref{sec.beta1}--\ref{sec.beta4} are concerned with the proofs of Theorems \ref{thm.ex1}--\ref{thm.ex4}, respectively. The localization limit in Theorem \ref{thm.loc} is shown in Section \ref{sec.loc}. In Section \ref{sec.num}, we present some numerical simulations that explore how the solution behavior depends on the parameter $\beta$. We formally derive the system \eqref{1.u}--\eqref{1.p} from a kinetic model by using a Chapman--Enskog expansion in Appendix \ref{sec.model}. Finally, in Appendix \ref{sec:appEllReg}, we discuss the elliptic regularity supposed in Assumption (A4).


\section{Preparations}\label{sec.prep}

We define the operator $L_\eps:H^{-1}(\Omega;\R^d)\to H^{-1}(\Omega;\R^d)$ by $L_\eps(g)=v$, where $v\in H^1_0(\Omega;\R^d)$ is the unique weak solution to
\begin{align*}
  -\eps\Delta v + v = g\quad\mbox{in }\Omega, \quad 
  v=0\quad\mbox{on }\pa\Omega,
\end{align*}
where $\eps>0$. The estimate $\|v\|_{H^1(\Omega)}\le C(\eps)\|g\|_{H^{-1}(\Omega)}$ follows from elliptic theory. The operator $L_\eps$ is linear, symmetric, positive, bounded, and self-adjoint. We deduce from spectral theory that there exists the unique square root operator $K_\eps$ on $H^{-1}(\Omega;\R^d)$, fulfilling the same properties. By \cite[Lemma 7]{JVZ24}, it holds for all $g\in H^{-1}(\Omega;\R^d)$ that 
\begin{align}\label{2.Leps}
  \eps\|v\|_{L^2(\Omega)}^2 + \|v\|_{L^2(\Omega)}^2 
  \le \|K_\eps(g)\|_{L^2(\Omega)}^2.
\end{align}

We also need a higher-order regularization. Let $\eta>0$, $m\ge d/2+1$, $m\in \N$, and $X=H^m(\Omega;\R^d)\cap H_0^1(\Omega;\R^d)$. We introduce the operator $L_\eps^\eta:X'\to X'$ by $L_\eps^\eta(g)=v$, where $v\in X$ is the unique solution to
\begin{align*}
  \int_\Omega\bigg(\eta\sum_{|\alpha|=m}D^\alpha v\cdot
  D^\alpha\phi + \eps\na v:\na\phi + v\cdot\phi\bigg)dx
  = \langle g,\phi\rangle
\end{align*}
for all $\phi\in X$, where $\langle\cdot,\cdot\rangle$ is the dual product in $X'\times X$ and ``:'' is the Frobenius matrix product (summation over both indices). Since $m>d/2$, we have the embedding $H^m(\Omega)\hookrightarrow L^\infty(\Omega)$. The operator $L_\eps^\eta$ has the same properties as $L_\eps$, so there exists the unique square root operator $K_\eps^\eta$ on $X'$. Similarly as for $L_\eps$, it holds that
\begin{align}\label{2.KLeta}
  \eta \|v\|_{H^m(\Omega)}^2 
  + \eps\|\na v\|_{L^2(\Omega)}^2 + \|v\|_{L^2(\Omega)}^2
  \le C\|K_\eps^\eta(g)\|_{L^2(\Omega)}^2 \quad\mbox{for }g\in X'.
\end{align}
If Assumption (A4) is satisfied, we conclude additional bounds that are uniform in $\eta$; see Appendix \ref{sec:appEllReg}.


\section{Case $0<\beta<1/d$}\label{sec.beta1} 

In this section, we prove Theorem \ref{thm.ex1}. We split the proof into several steps.

\subsection{Solution to an approximate problem}

Let $T>0$, $N\in\N$, and let $\tau=T/N>0$ be the time step. We introduce the following time-discrete recursive approximate scheme
\begin{align}\label{3.approx1}
  \frac{1}{\tau}\int_\Omega(u_i^k-u_i^{k-1})\phi_i dx
  + \sigma_i\int_\Omega\na u_i^k\cdot\na\phi_i dx
  = \int_\Omega u_i^k v_i^k\cdot\na\phi_i dx, \quad i=1,\ldots,n,
\end{align}
for all $\phi_i\in L^2(0,T;H^1(\Omega))$, where $v_i^k=L_\eps(-\na p_i(u^k))$ and the operator $L_\eps$ is defined in Section \ref{sec.prep}. To apply a fixed-point argument, we first introduce a linear problem. Let $u^{k-1}\in L^2(\Omega;\R^n)$ with $u_i^{k-1}\ge 0$ in $\Omega$ be given, and let $\delta\in[0,1]$, $y=(y_1,\ldots,y_n)\in L^2(\Omega;\R^n)$. We set $\widetilde{v}_i = L_\eps(-\na p_i(y))$ for $i=1,\ldots,n$. Since $2/\beta \geq 2$ and $y_j^\beta\in L^{2/\beta}(\Omega)$, we can apply the result of \cite{Gro89} to conclude that $\widetilde{v}_i\in W^{1,2/\beta}(\Omega;\R^d)\subset L^\infty(\Omega;\R^d)$, where the embedding holds because $\beta<2/d$. We define the approximate linear problem
\begin{align}\label{3.lin1}
  \frac{\delta}{\tau}\int_\Omega&(y_i-u_i^{k-1}) \phi_i dx
  + \sigma_i\int_\Omega\na u_i\cdot\na\phi_i dx
  + \int_\Omega u_i\phi_i dx \\
  &= \delta\int_\Omega(y_i)_+ \widetilde{v}_i\cdot\na\phi_i dx
  + \delta\int_\Omega y_i\phi_i dx \nonumber 
\end{align}
for all $\phi_i\in H^1(\Omega)$, where $(y_i)_+=\max\{0,y_i\}$. The last terms on the left- and right-hand sides cancel if $y_i=u_i$ and $\delta=1$. They are needed to obtain the coercivity of the associated bilinear form (see below). The first integral on the right-hand side exists, since $(y_i)_+\in L^2(\Omega)$ and $\widetilde{v}_i\in L^\infty(\Omega;\R^d)$. We introduce for $\phi_i\in H^1(\Omega)$ the functionals
\begin{align*}
  a(u_i,\phi_i) &= \sigma_i\int_\Omega\na u_i\cdot\na\phi_i dx
  + \int_\Omega u_i\phi_i dx, \\
  F(\phi_i) &= -\frac{\delta}{\tau}\int_\Omega(y_i -u_i^{k-1})\phi_i dx
  + \delta\int_\Omega(y_i)_+\widetilde{v}_i\cdot\na\phi_i dx
  + \delta\int_\Omega y_i\phi_i dx.
\end{align*}
The bilinear form $a$ is coercive and bounded on $H^1(\Omega)$, and the linear form $F$ is bounded on $H^1(\Omega)$. By the Lax--Milgram lemma, there exists a unique solution $u_i\in H^1(\Omega)$ to $a(u_i,\phi_i)=F(\phi_i)$ for $\phi_i\in H^1(\Omega)$.

This defines the fixed-point operator $Q:L^2(\Omega;\R^n)\times[0,1]\to L^2(\Omega;\R^n)$, $Q(y,\delta)=u$. Any fixed point of $Q(\cdot,1)$ with $u_i\ge 0$ in $\Omega$ for $i=1,\ldots,n$ solves \eqref{3.approx1}. It holds that $Q(y_i,0)=0$. Standard arguments show the continuity of $Q$, and the compact embedding $H^1(\Omega)\hookrightarrow L^2(\Omega)$ implies the compactness of $Q$. It remains to find a constant $C>0$ such that $\|u\|_{L^2(\Omega)}\le C$ for all fixed points $u$ of $Q(\cdot,\delta)$ and all $\delta\in(0,1]$. 

Let $u\in H^1(\Omega;\R^n)$ be such a fixed point. First, we use the test function $\phi_i=(u_i)_-=\min\{0,u_i\}$ in \eqref{3.lin1} with $y=u$ and $v_i=L_\eps(-\na p_i(u))$:
\begin{align*}
  \frac{\delta}{\tau}\int_\Omega& (u_i)_-^2 dx
  + \sigma_i\int_\Omega|\na (u_i)_-|^2 dx
  + (1-\delta)\int_\Omega(u_i)_-^2 dx \\
  &= \frac{\delta}{\tau}\int_\Omega u_i^{k-1}(u_i)_- dx
  + \delta\int_\Omega(u_i)_+ v_i\cdot\na(u_i)_- dx\le 0,
\end{align*}
since $u_i^{k-1}(u_i)_-\le 0$ and $(u_i)_+\na(u_i)_-=0$ in $\Omega$. This shows that $u_i\ge 0$ in $\Omega$. Next, we use $\phi_i=1$ as a test function in \eqref{3.lin1} with $y=u$, leading to
\begin{align*}
  \int_\Omega u_i dx = \delta\int_\Omega u_i^{k-1}dx
  - (1-\delta)\int_\Omega u_i dx 
  \le \int_\Omega u_i^{k-1}dx \le \int_\Omega u_i^0 dx,
\end{align*}
where the last step follows by iteration. Hence, $\|u_i\|_{L^1(\Omega)}\le C(u_i^0)$. We infer that $\na u_i^\beta$ is bounded $W^{-1,1/\beta}(\Omega)$ (uniformly in $\delta$) and, by elliptic regularity \cite{Gro89}, $v_i=L_\eps(-\na p_i(u_i))$ is uniformly bounded in $W^{1,1/\beta}(\Omega)$ $\subset L^\infty(\Omega)$ if $\beta<1/d$. This bound, which only depends on $u_i^0$, $\Omega$, and $\eps$, allows us to derive an $H^1(\Omega)$ estimate for $u_i$. Indeed, we use the test function $\phi_i=u_i$ in \eqref{3.lin1} with $y=u$:
\begin{align*}
  \frac{\delta}{\tau}&\int_\Omega(u_i-u_i^{k-1})u_i dx
  + \sigma_i\int_\Omega|\na u_i|^2 dx 
  + (1-\delta)\int_\Omega u_i^2 dx \\
  &= \delta\int_\Omega u_iv_i\cdot\na u_i dx
  \le \frac{\sigma_i}{2}\int_\Omega|\na u_i|^2 dx
  + \frac{\delta^2}{2\sigma_i}\|v_i\|_{L^\infty(\Omega)}^2
  \int_\Omega u_i^2 dx.
\end{align*}
Taking into account the inequality $(u_i-u_i^{k-1})u_i\ge \frac12(u_i^2-(u_i^{k-1})^2)$ and the property $\delta^2\le \delta$, we find that
\begin{align*}
  \frac{\delta}{2\tau}\int_\Omega\big(u_i^2 - (u_i^{k-1})^2\big)dx
  + \frac{\sigma_i}{2}\int_\Omega|\na u_i|^2 dx
  \le \frac{\delta}{2\sigma_i}\|v_i\|_{L^\infty(\Omega)}^2
  \int_\Omega u_i^2 dx.
\end{align*}
Choosing $\tau<\sigma_i/\|v_i\|_{L^\infty(\Omega)}^2$, we obtain a uniform $L^2(\Omega)$ bound for $\na u_i$. Since $u_i$ is uniformly bounded in $L^1(\Omega)$, the Poincar\'e--Wirtinger inequality yields a uniform $H^1(\Omega)$ bound for $u_i$. We conclude from the Leray--Schauder fixed-point theorem that there exists a fixed point of $Q(\cdot,1)$ and hence a weak solution $u_i^k:=u_i$ to \eqref{3.approx1}.  

\subsection{Limit $\tau\to 0$}

We define the piecewise constant in time functions
\begin{align}\label{3.utau}
  u^{(\tau)}(x,t)=u^k(x), \quad v^{(\tau)}(x,t) = v^k(x)
  \quad\mbox{for }x\in\Omega,\ t\in((k-1)\tau,k\tau],
\end{align}
where $k=1,\ldots,N$. For $t=0$, we define $u^{(\tau)}(\cdot,0)=u^0$ in $\Omega$. The time shift is given by $\pi_\tau u^{(\tau)}(x,t)=u^{k-1}(x)$ for $x\in\Omega$ and $t\in((k-1)\tau,k\tau]$. The function $u_i^{(\tau)}$ solves
\begin{align}\label{3.tau}
  \frac{1}{\tau}\int_0^T\int_\Omega(u_i^{(\tau)}
  - \pi_\tau u_i^{(\tau)})\phi_i dxdt
  + \sigma_i\int_0^T\int_\Omega\na u_i^{(\tau)}\cdot\na\phi_i dxdt
  = \int_0^T\int_\Omega u_i^{(\tau)} v_i^{(\tau)}\cdot\na\phi_i dxdt
\end{align}
for all $\phi_i\in L^2(0,T;H^1(\Omega))$, where $v_i^{(\tau)} = L_\eps(-\na p_i(u^{(\tau)}))$. Our estimates imply, after summation over $k=1,\ldots,N$, that 
\begin{align*}
  \|u_i^{(\tau)}\|_{L^\infty(0,T;L^{1}(\Omega))}
  + \|u_i^{(\tau)}\|_{L^2(0,T;H^1(\Omega))} \le C_1, \quad
  \|v_i^{(\tau)}\|_{L^\infty(\Omega_T)} \le C_2(\eps),
\end{align*}
where $C_1$, $C_2(\eps)>0$ are independent of $\tau$. We wish to derive a uniform estimate for the discrete time derivative. Let $\phi_i\in L^2(0,T;H^1(\Omega))$. Then
\begin{align*}
  \frac{1}{\tau}\bigg|&\int_0^T\int_\Omega(u_i^{(\tau)} 
  - \pi_\tau u_i^{(\tau)})\phi_i dxdt\bigg| \\
  &\le \sigma_i\|\na u_i^{(\tau)}\|_{L^2(\Omega_T)}
  \|\na\phi_i\|_{L^2(\Omega_T)}
  + \|u_i^{(\tau)}\|_{L^2(\Omega_T)}
  \|v_i^{(\tau)}\|_{L^\infty(\Omega_T)}\|\na\phi_i\|_{L^2(\Omega_T)} \\
  &\le C(\eps)\|\phi_i\|_{L^2(0,T;H^1(\Omega))}.
\end{align*}
We infer that there exists a constant $C(\eps)>0$ such that for all $\tau>0$,
\begin{align*}
  \tau^{-1}\|u_i^{(\tau)}-\pi_\tau u_i^{(\tau)}\|_{L^2(0,T;H^1(\Omega)')}
  \le C(\eps).
\end{align*}
Thus, we can apply the Aubin--Lions lemma in the version of \cite{DrJu12} to obtain the existence of a subsequence that is not relabeled such that, as $\tau\to 0$,
\begin{align*}
  u_i^{(\tau)}\to u_i\quad \mbox{strongly in }L^2(\Omega_T). 
\end{align*}
Moreover, we have, up to subsequences,
\begin{align*}
  \na u_i^{(\tau)}\rightharpoonup \na u_i &\quad\mbox{weakly in }
  L^2(0,T;H^1(\Omega)), \\
  \pa_t u_i^{(\tau)}\rightharpoonup \pa_t u_i 
  &\quad\mbox{weakly in }L^2(0,T;H^1(\Omega)'), \\
  v_i^{(\tau)}\rightharpoonup^* v_i
  &\quad\mbox{weakly* in }L^\infty(\Omega_T). 
\end{align*}
Thus, the limit $\tau\to 0$ in \eqref{3.tau} shows that $u_i$ solves
\begin{align}\label{3.w}
  \int_0^T\langle \pa_t u_i,\phi_i\rangle dt
  + \sigma_i\int_0^T\int_\Omega\na u_i\cdot\na\phi_i dxdt
  = \int_0^T\int_\Omega u_iv_i\cdot\na\phi_i dxdt.
\end{align}
The bound of $u_i^{(\tau)}$ in $L^2(0,T;H^1(\Omega))\cap H^1(0,T;H^1(\Omega)')\hookrightarrow C^0([0,T];L^2(\Omega))$ implies, for a continuous representative, that $u_i^{(\tau)}\rightharpoonup u_i$ in $C^0([0,T];L^2(\Omega))$ and therefore $u_i(0)=u_i^0$ holds in the sense of $L^2(\Omega)$. 

It remains to identify $v_i$. We can perform the limit $\tau\to 0$ in the weak formulation of $v_i^{(\tau)}=L_\eps(-\na p_i(u^{(\tau)}))$,
\begin{align*}
  -\eps\int_0^T\int_\Omega v_i^{(\tau)}\cdot\Delta\phi dxdt
  + \int_0^T\int_\Omega v_i^{(\tau)}\cdot\phi dxdt
  = \int_0^T\int_\Omega p_i(u^{(\tau)})\diver\phi dxdt
\end{align*}
for $\phi\in C_0^\infty(\Omega_T;\R^d)$. Indeed, we have $p_i(u^{(\tau)})\to p_i(u)$ strongly in $L^{2/\beta}(\Omega_T)$ and hence, in the limit $\tau\to 0$,
\begin{align*}
  -\eps\int_0^T\int_\Omega v_i\cdot\Delta\phi dxdt
  + \int_0^T\int_\Omega v_i\cdot\phi dxdt 
  = \int_0^T\int_\Omega p_i(u)\diver\phi dxdt.
\end{align*}
This gives $v_i=-\na p_i(u)$ in the sense of distributions and finishes the proof of the existence of a weak solution to \eqref{1.u}--\eqref{1.bic}. 

The boundedness of $u_i$ follows as in \cite[Lemma 9]{JVZ24} from an Alikakos iteration. The idea is to use $u_i^{\gamma-1}$ with $\gamma>1$ as a test function in \eqref{3.w} and to apply the Gagliardo--Nirenberg inequality as well as the $L^\infty(\Omega_T)$ bound for $v_i$ to derive the recursive inequality
\begin{align*}
  \frac{d}{dt}\|u_i\|_{L^\gamma(\Omega)}^\gamma
  \le C\gamma^{d/2+1}\|u_i\|_{L^{\gamma/2}(\Omega)}^\gamma.
\end{align*}
This yields for all $\gamma>1$,
\begin{align*}
  \|u_i\|_{L^\gamma(\Omega)}\le C\big(
  \|u_i\|_{L^\infty(0,T;L^1(\Omega))} + \|u_i^0\|_{L^\infty(\Omega)}
  \big) \le C(u_i^0). 
\end{align*}
Since $u_i^\gamma$ may be not an admissible test function for large values of $\gamma$, we need to approximate $u_i^\gamma$. We refer to the proof of \cite[Lemma 9]{JVZ24} for details. The proof in \cite{JVZ24} holds for the time-continuous problem, but the time-discrete equation is estimated in exactly the same way, using the convexity of $z\mapsto z^\gamma$ for $\gamma>1$. This shows that
\begin{align*}
  \|u_i\|_{L^\infty(\Omega_T)} \le C.
\end{align*}
The constant $C>0$ is independent of $\eps$ if the $L^\infty(\Omega_T)$ bound for $v_i$ does not depend on $\eps$. 


\subsection{Uniqueness of solutions}

Since $u_i\in L^\infty(0,T;L^\infty(\Omega))$ and $v_i\in L^\infty(\Omega_T;\R^d)$, the uniqueness proof relies on standard $L^2(\Omega)$ estimations. For completeness, we present the short proof. Let $u$ and $\bar{u}$ be two bounded weak solutions to \eqref{1.u}--\eqref{1.bic} with the same initial data and set $v_i=L_\eps(-\na p_i(u))$, $\bar{v}_i=L_\eps(-\na p_i(\bar{u}))$. The difference $u_i-\bar{u}_i\in L^2(0,T;H^1(\Omega))$ is an admissible test function in the difference of \eqref{3.w}, satisfied for $u_i$ and $\bar{u}_i$, leading to
\begin{align*}
  \frac12\frac{d}{dt}&\int_\Omega(u_i-\bar{u}_i)^2 dx
  + \sigma_i\int_\Omega|\na(u_i-\bar{u}_i)|^2 dx
  = \int_\Omega(u_iv_i-\bar{u}_i\bar{v}_i)\cdot\na(u_i-\bar{u}_i)dx \\
  &\le \frac{\sigma_i}{2}\int_\Omega|\na(u_i-\bar{u}_i)|^2 dx
  + \frac{1}{2\sigma_i}\|u_i\|_{L^\infty(\Omega_T)}^2
  \|v_i-\bar{v}_i\|_{L^2(\Omega_T)}^2 \\
  &\phantom{xx}+ \frac{1}{2\sigma_i}\|u_i-\bar{u}_i\|_{L^2(\Omega_T)}^2
  \|\bar{v}_i\|_{L^\infty(\Omega_T)}^2.
\end{align*}
The linear operator $L^2(\Omega_T)\to L^2(\Omega_T)$, $u_i\mapsto v_i = L_\eps(-\na p_i(u))$ is continuous. This shows that $\|v_i-\bar{v}_i\|_{L^2(\Omega_T)} \le C\|u_i-\bar{u}_i\|_{L^2(\Omega_T)}$ and consequently,
\begin{align*}
  \frac12\frac{d}{dt}\int_\Omega(u_i-\bar{u}_i)^2 dx
  + \frac{\sigma_i}{2}\int_\Omega|\na(u_i-\bar{u}_i)|^2 dx
  \le C\|u_i-\bar{u}_i\|_{L^2(\Omega_T)}^2.
\end{align*}
Gronwall's lemma implies that $\|(u_i-\bar{u}_i)(t)\|_{L^2(\Omega_T)}=0$, hence $u_i(t)=\bar{u}_i(t)$ and $v_i(t)=\bar{v}_i(t)$ in $\Omega$, $t>0$, $i=1,\ldots,n$.


\section{Case $1/2\le\beta<1$ and $d=2$}\label{sec.beta2}

Again, we split the proof of Theorem \ref{thm.ex2} into several steps. The first steps are valid for $\beta\ge 1/2$, but later we need to distinguish between $\beta>1/2$ and $\beta=1/2$. 

\subsection{Solution to an approximate problem}

Similarly to the previous section, we introduce the time-discrete approximate scheme
\begin{align}\label{3.approx2}
  \frac{1}{\tau}\int_\Omega(u_i^k-u_i^{k-1})\phi_i dx
  + \sigma_i\int_\Omega\na u_i^k\cdot\na\phi_i dx
  = \int_\Omega u_i^k v_i^k\cdot\na\phi_i dx, \quad i=1,\ldots,n,
\end{align}
for all $\phi_i\in L^2(0,T;H^1(\Omega))$, where $v_i^k=L_\eps^\eta(-\na p_i(u^k))$ and the operator $L_\eps^\eta$ is defined in Section \ref{sec.prep}. We define the fixed-point operator $Q$ as in the previous section: Let $u^{k-1}\in L^2(\Omega;\R^n)$ with $u_i^{k-1}\ge 0$ in $\Omega$ and let $y\in L^2(\Omega;\R^n)$. Then $p_i(y)\in L^{2/\beta}(\Omega)$ and $\widetilde{v}_i=L_\eps^\eta(-\na p_i(y))\in H^{m}(\Omega;\R^d)\hookrightarrow L^\infty(\Omega;\R^d)$ (since $m>d/2$). Thus, the linear problem \eqref{3.lin1} has a unique solution $u_i\in H^1(\Omega)$. This defines the fixed point operator $Q(y,\delta)=u$. We need to find a uniform bound for all fixed points of $Q(\cdot,\delta)$. 

Let $u\in H^1(\Omega;\R^n)$ be such a fixed point. Then the $L^1(\Omega)$ norm of $u_i$ only depends on $\|u_i^0\|_{L^1(\Omega)}$. Thus, $\na u_i^\beta$ is bounded in $W^{-1,1/\beta}(\Omega;\R^d) \hookrightarrow H^{-m}(\Omega;\R^d)$ (since $m\ge d/2+1$). This shows that $v_i=L_\eps^\eta(-\na p_i(u))$ is bounded in $H^m(\Omega;\R^d)\hookrightarrow L^\infty(\Omega;\R^d)$. The proof of \cite[Lemma 9]{JVZ24} implies that $u_i$ is bounded in $L^\infty(\Omega)$ uniformly in $\delta$. This provides the required uniform $L^2(\Omega)$ bound, and we can apply the Leray--Schauder fixed-point theorem to conclude the existence of a weak solution $u$ to \eqref{3.approx2}.  

\subsection{Approximate entropy inequality}

We aim to derive an approximate entropy inequality. For this, we first observe that the $L^\infty(\Omega)$ bound for $v_i$ implies, as in the previous section, that $\na u_i\in L^2(\Omega;\R^d)$. We claim that also $\na\sqrt{u_i}\in L^2(\Omega;\R^d)$. With the test function $\phi_i=\log(u_i+\mu)\in H^1(\Omega)$ in \eqref{3.approx2} (where $\mu>0$), we infer that
\begin{align}\label{3.aux2}
  0 &= \frac{1}{\tau}\int_\Omega(u_i-u_i^{k-1})\log(u_i+\mu)dx
  + \sigma_i\int_\Omega\na u_i\cdot\na\log(u_i+\mu)dx \\
  &\phantom{xx}
  - \int_\Omega u_i v_i\cdot\na\log(u_i+\mu)dx \nonumber \\
  &\ge \frac{1}{\tau}\int_\Omega\big((u_i+\mu)(\log(u_i+\mu)-1)
   - (u_i^{k-1}+\mu)(\log(u_i^{k-1}+\mu)-1)\big)dx \nonumber \\
  &\phantom{xx}
  + 4\sigma_i\int_\Omega \frac{u_i}{u_i+\mu}|\na\sqrt{u_i}|^2 dx
  - \int_\Omega\frac{u_i}{u_i+\mu}v_i\cdot\na u_i dx,
  \nonumber 
\end{align}
where $C>0$ is independent of $\mu$, taking into account that $z\mapsto z(\log z-1)$ is convex. 
The last integral on the right-hand side is estimated as
\begin{align*}
  \bigg|\int_\Omega\frac{u_i}{u_i+\mu}v_i\cdot\na u_i dx\bigg|
  \le \|v_i\|_{L^2(\Omega)}\|\na u_i\|_{L^2(\Omega)}.
\end{align*}
Thus, we can apply the dominated convergence theorem to perform the limit $\mu\to 0$ in this integral (and also in the first integral on the right-hand side of \eqref{3.aux2}). We conclude from \eqref{3.aux2} in the limit $\mu\to 0$ that
\begin{align*}
  0 \ge \frac{1}{\tau}\int_\Omega\big(u_i(\log u_i-1)
  - u_i^{k-1}(\log u_i^{k-1}-1)\big)dx
  + 4\sigma_i\int_\Omega|\na\sqrt{u_i}|^2 dx 
  - \int_\Omega v_i\cdot\na u_i dx.
\end{align*}
This shows that 
\begin{align*}
  \frac{1}{\tau}&\int_\Omega u_i(\log u_i-1)dx
  + 4\sigma_i\int_\Omega|\na\sqrt{u_i}|^2 dx \\
  &\le \frac{1}{\tau}\int_\Omega u_i^{k-1}(\log u_i^{k-1}-1)dx
  + \|v_i\|_{L^2(\Omega)}\|\na u_i\|_{L^2(\Omega)}.
\end{align*}
We infer that $\sqrt{u_i}\in H^1(\Omega)$. Therefore,
\begin{align*}
  \na u_i^\beta = 2\beta u_i^{\beta-1/2}\na\sqrt{u_i}\in L^2(\Omega).
\end{align*}
At this point, we need $\beta\ge 1/2$. Notice that the previous bounds generally depend on $\eta$. We derive $\eta$-uniform estimates from the approximate entropy inequality that is proved next.

\begin{lemma}\label{lem.aei1}
Let $0<\beta<1$ and let $u^k$ be a weak solution to \eqref{3.approx2}. Then 
\begin{align*}
  \frac{1}{\tau}&\sum_{i=1}^n\int_\Omega
  \big(u_i^k-(u_i^k)^\beta\big)dx
  + \frac{4}{\beta}(1-\beta)\sum_{i=1}^n\int_\Omega\sigma_i
  |\na(u_i^k)^{\beta/2}|^2dx \\
  &+ \alpha(1-\beta) 
  \sum_{i=1}^n\int_\Omega |K_\eps^\eta(\na(u_i^k)^\beta)|^2 dx
  \le \frac{1}{\tau}\sum_{i=1}^n\int_\Omega
  \big(u_i^{k-1}-(u_i^{k-1})^\beta\big)dx.
\end{align*}
\end{lemma}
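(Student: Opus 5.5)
The natural route is to test \eqref{3.approx2} with $\phi_i = 1-\beta(u_i^k)^{\beta-1}$, i.e.\ $\phi_i=(1-\beta)h'(u_i^k)$ with $h(z)=(z^\beta-z)/(\beta-1)$. Since $\beta<1$, this function is singular at $u_i^k=0$, so I would regularize with $\mu>0$ and use $\phi_i^\mu = 1-\beta(u_i^k+\mu)^{\beta-1}\in H^1(\Omega)$. This is admissible because $u_i^k\in H^1(\Omega)\cap L^\infty(\Omega)$ and $z\mapsto(z+\mu)^{\beta-1}$ is Lipschitz on $[0,\infty)$. Then I would let $\mu\to 0$, as was done in \eqref{3.aux2}.

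\emph{Time term.} The function $g(z)=z-z^\beta$ is convex for $0<\beta<1$, and $g'(z)=1-\beta z^{\beta-1}$. Convexity gives $(u_i^k-u_i^{k-1})\phi_i^\mu\ge g_\mu(u_i^k)-g_\mu(u_i^{k-1})$ with $g_\mu(z)=z-(z+\mu)^\beta$. In the limit this yields the difference quotient $\tau^{-1}\int_\Omega(g(u_i^k)-g(u_i^{k-1}))dx$.

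\emph{Diffusion term.} We have $\na\phi_i^\mu=\beta(1-\beta)(u_i^k+\mu)^{\beta-2}\na u_i^k$. Hence the diffusion integral equals $\sigma_i\beta(1-\beta)\int(u_i^k+\mu)^{\beta-2}|\na u_i^k|^2dx$. This is bounded below by $\sigma_i\beta(1-\beta)\int\frac{u_i^k}{u_i^k+\mu}\,(u_i^k)^{\beta-2}|\na u_i^k|^2dx\cdot(\text{factor}\le1)$, so Fatou's lemma gives the term $\sigma_i\beta(1-\beta)\int (u_i^k)^{\beta-2}|\na u_i^k|^2 = \frac{4}{\beta}(1-\beta)\sigma_i\int|\na(u_i^k)^{\beta/2}|^2$.

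\emph{Drift term.} The drift contributes $-\int u_i^k v_i^k\cdot\na\phi_i^\mu dx=-\beta(1-\beta)\int u_i^k(u_i^k+\mu)^{\beta-2}v_i^k\cdot\na u_i^k dx$. As $\mu\to0$, the weight $u_i^k(u_i^k+\mu)^{\beta-2}\na u_i^k$ converges to $(u_i^k)^{\beta-1}\na u_i^k=\beta^{-1}\na(u_i^k)^\beta$. I must justify dominated convergence. Since $\beta\ge1/2$, the previous step gives $\na\sqrt{u_i^k}\in L^2$ and $u_i^k\in L^\infty$. The integrand is dominated by $C(u_i^k)^{\beta-1/2}|\na\sqrt{u_i^k}|\,|v_i^k|$, which is integrable because $v_i^k\in L^\infty$. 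This domination step is the main obstacle; it uses exactly $\beta\ge1/2$ and the $\eta$-dependent $L^\infty$ bounds. The drift term therefore tends to $-(1-\beta)\int v_i^k\cdot\na(u_i^k)^\beta dx$.

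\emph{Summation and positivity.} Now I sum over $i$ and insert $v_i^k=L_\eps^\eta(-\sum_j a_{ij}\na(u_j^k)^\beta)$. The drift term becomes
\begin{align*}
(1-\beta)\sum_{i,j}a_{ij}\langle L_\eps^\eta(\na(u_j^k)^\beta),\na(u_i^k)^\beta\rangle
=(1-\beta)\sum_{i,j}a_{ij}\int_\Omega K_\eps^\eta(\na(u_i^k)^\beta)\cdot K_\eps^\eta(\na(u_j^k)^\beta)dx .
\end{align*}
This uses the self-adjointness of $K_\eps^\eta$ and $K_\eps^\eta\circ K_\eps^\eta=L_\eps^\eta$; note that $\na(u_i^k)^\beta\in L^2\subset X'$. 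Applying positive definiteness of $(a_{ij})$ pointwise to the vectors $(K_\eps^\eta(\na(u_i^k)^\beta))_i$ bounds this from below by $\alpha(1-\beta)\sum_i\int|K_\eps^\eta(\na(u_i^k)^\beta)|^2dx$. Collecting the three terms gives the claimed inequality.
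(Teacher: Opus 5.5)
Your proposal is correct and follows essentially the paper's proof. It uses the same regularized test function $1-\beta(u_i^k+\mu)^{\beta-1}$, convexity for the time term, dominated convergence for the drift term, and positive definiteness of $(a_{ij})$ via $K_\eps^\eta$. Your majorant $C(u_i^k)^{\beta-1/2}|\na\sqrt{u_i^k}|\,|v_i^k|$ is just the paper's $|v_i^k|\,|\na(u_i^k)^\beta|$ written differently, so both arguments rely on $\beta\ge 1/2$. Applying Fatou's lemma to the diffusion term, instead of the paper's implicit monotone limit, is a harmless and slightly cleaner variant.
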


\begin{proof}
Let $\mu>0$. The idea is to take $\phi_i=1-\beta(u_i^k+\mu)^{\beta-1}$ as a test function in \eqref{3.lin1} with $y=u^k$. The regularization with $\mu$ is necessary since $(u_i^k)^{\beta-1}$ is not defined if $u_i^k=0$ (as $\beta<1$). This gives, after summation over $i=1,\ldots,n$,
\begin{align*}
  0 &= \frac{1}{\tau}\sum_{i=1}^n\int_\Omega(u_i^k-u_i^{k-1})
  (1-\beta(u_i^k+\mu)^{\beta-1})dx
  + \sum_{i=1}^n\sigma_i\int_\Omega\na u_i^k\cdot
  \na(1-\beta(u_i^k+\mu)^{\beta-1})dx \\
  &\phantom{xx}
  - \sum_{i=1}^n\int_\Omega u_i^k v_i^k\cdot
  \na(1-\beta(u_i^k+\mu)^{\beta-1})dx =: I_1 + I_2 + I_3,
\end{align*}
where $v_i^k=L_\eps^\eta(-\na p_i(u^k))$. The convexity of $z\mapsto -(z+\mu)^{\beta}$ implies that 
\begin{align*}
  I_1 \ge \frac{1}{\tau}\sum_{i=1}^n\int_\Omega
  \big(u_i^k - (u_i^k+\mu)^\beta\big)dx
  - \frac{1}{\tau}\sum_{i=1}^n\int_\Omega\big(u_i^{k-1} 
  - (u_i^{k-1}+\mu)^\beta\big)dx.
\end{align*}
The second integral $I_2$ can be formulated as
\begin{align*}
  I_2 = \frac{4}{\beta}(1-\beta)\sum_{i=1}^n\sigma_i\int_\Omega
  \frac{(u_i^k)^{2-\beta}}{(u_i^k+\mu)^{2-\beta}}
  |\na(u_i^k)^{\beta/2}|^2 dx.
\end{align*}
Finally, we write
\begin{align*}
  I_3 &= \beta(\beta-1)\sum_{i=1}^n \int_\Omega
  u_i^k(u_i^k+\mu)^{\beta-2}v_i^k\cdot\na u_i^k dx \\
  &= -(1-\beta)\sum_{i=1}^n\int_\Omega 
  \frac{(u_i^k)^{2-\beta}}{(u_i^k+\mu)^{2-\beta}}
  v_i^k\cdot\na(u_i^k)^{\beta} dx.
\end{align*}
We know that $v_i^k$ and $\na(u_i^k)^\beta$ are bounded in $L^2(\Omega;\R^d)$ uniformly in $\mu$. Thus, we can apply the dominated convergence theorem to perform the limit $\mu\to 0$ in $I_3$. Arguing similarly for the other terms, we arrive in the limit $\mu\to 0$ at
\begin{align}\label{3.aux21}
  0 &= \frac{1}{\tau}\sum_{i=1}^n
  \int_\Omega\big(u_i^k - (u_i^k)^\beta\big)dx
  - \frac{1}{\tau}\int_\Omega\big(u_i^{k-1} 
  - (u_i^{k-1})^\beta\big)dx \\
  &\phantom{xx}+ \frac{4}{\beta}(1-\beta)\sum_{i=1}^n\sigma_i\int_\Omega
  |\na(u_i^k)^{\beta/2}|^2 dx - (1-\beta)\sum_{i=1}^n\int_\Omega 
  v_i^k\cdot\na(u_i^k)^{\beta} dx. \nonumber 
\end{align}
We deduce from the definition $p_i(u^k)=\sum_{j=1}^n a_{ij}u_j^k$ that
\begin{align*}
  -\sum_{i=1}^n&\int_\Omega v_i^k\cdot\na(u_i^k)^{\beta} dx
  = \sum_{i,j=1}^n a_{ij}\int_\Omega L_\eps^\eta(\na (u_j^k)^\beta)
  \cdot\na (u_i^k)^\beta dx \\
  &= \sum_{i,j=1}^n a_{ij}\int_\Omega K_\eps^\eta(\na (u_j^k)^\beta)
  \cdot K_\eps^\eta(\na (u_i^k)^\beta) dx
  \ge \alpha\sum_{i=1}^n\int_\Omega|K_\eps^\eta(\na (u_i^k)^\beta)|^2 dx,
\end{align*}
using the positive definiteness of $(a_{ij})$. Inserting this expression into \eqref{3.aux21} finishes the proof.
\end{proof}

\subsection{Further uniform estimates}

We define the piecewise in time functions $u_i^{(\tau)}$ and $v_i^{(\tau)}$ as in \eqref{3.utau}. Lemma \ref{lem.aei1} implies the following bounds for $i=1,\ldots,n$, being uniform in $\tau$, $\eta$, and $\eps$:
\begin{align*}
  \|u_i^{(\tau)}\|_{L^\infty(0,T;L^1(\Omega))}
  + \|(u_i^{(\tau)})^{\beta/2}\|_{L^2(0,T;H^1(\Omega))}
  + \|K_\eps^\eta(\na (u_i^{(\tau)})^\beta)\|_{L^2(\Omega_T)} \le C.
\end{align*}
It follows from the uniform bound of $\na (u_i^{(\tau)})^\beta$ in $L^\infty(0,T;W^{-1,1/\beta}(\Omega))\hookrightarrow L^\infty(0,T;H^{-m}(\Omega))$ for $m\ge d/2+1$ that $v_i^{(\tau)}=L_\eps^\eta(-\na p_i(u^{(\tau)}))$ is bounded in $L^\infty(0,T;H^m(\Omega))\hookrightarrow L^\infty(0,T;$ $L^\infty(\Omega))$. This bound is uniform in $\tau$ but not in $\eta$ (or $\eps$). Therefore, also the $L^\infty(\Omega_T)$ bound for $u_i^{(\tau)}$ from \cite[Lemma 9]{JVZ24} is not uniform in $\eta$, and we need to argue in a different way than in Section \ref{sec.beta1} to derive further bounds that are uniform in $\eta$. We use the Gagliardo--Nirenberg inequality to prove such bounds. We give a proof for general space dimensions to explain our restriction $d=2$. 

\begin{lemma}\label{lem.eta2}
Let $\beta\ge 1/2$ if $d=2$, $\beta\ge 1-1/d$ if $d\ge 3$, and $q=(\beta d+2)/(d+1)\ge 1$. Then there exist constants $C_1>0$ independent of $\tau$, $\eta$, and $\eps$ and $C_2(\eps)$ independent of $\tau$ and $\eta$, such that
\begin{align*}
  \|u_i^{(\tau)}\|_{L^{\beta+2/d}(\Omega_T)}
  + \|u_i^{(\tau)}\|_{L^q(0,T;W^{1,q}(\Omega))} 
  + \|v_i^{(\tau)}\|_{L^2(\Omega_T)} &\le C_1, \\
  \|v_i^{(\tau)}\|_{L^{(\beta d+2)/(\beta d)}(0,T;L^\gamma(\Omega))}
  + \|v_i^{(\tau)}\|_{L^r(\Omega_T)} &\le C_2(\eps),
\end{align*}
where $r<\infty$ if $\beta=1/2$, $d=2$ and otherwise,
\begin{align*}
  \gamma = \begin{cases}
  \infty &\mbox{if }d=2, \\
  d(\beta d+2)/(\beta d^2-\beta d-2) &\mbox{if }d\ge 3,
  \end{cases}\quad
  r = \begin{cases}
  1+1/\beta+2/(2\beta-1) &\mbox{if }d=2, \\
  (\beta d+2)/(\beta d-1) &\mbox{if }d\ge 3.
  \end{cases}
\end{align*}
\end{lemma}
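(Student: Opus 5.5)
The plan is to start from the uniform bounds of Lemma \ref{lem.aei1}: $u_i^{(\tau)}$ in $L^\infty(0,T;L^1(\Omega))$ and $w_i:=(u_i^{(\tau)})^{\beta/2}$ in $L^2(0,T;H^1(\Omega))$. Note that $w_i$ lies in $L^\infty(0,T;L^{2/\beta}(\Omega))$. The estimates then come in three steps.

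\emph{Step 1: the space--time Lebesgue bound.} Apply the Gagliardo--Nirenberg inequality to $w_i$:
\begin{align*}
\|w_i\|_{L^s(\Omega)}\le C\|w_i\|_{H^1(\Omega)}^\theta\|w_i\|_{L^{2/\beta}(\Omega)}^{1-\theta},
\qquad s=\frac{2(\beta d+2)}{\beta d}.
\end{align*}
The exponent $\theta$ is fixed by scaling. It equals $2/s$, i.e.\ $\theta s=2$. Integrating in time then gives $w_i\in L^s(\Omega_T)$, which means $u_i^{(\tau)}\in L^{\beta s/2}(\Omega_T)=L^{\beta+2/d}(\Omega_T)$.

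\emph{Step 2: the gradient bound.} Write $\na u_i=\frac{2}{\beta}w_i^{2/\beta-1}\na w_i$ and use H\"older with $\na w_i\in L^2$. This bounds $\na u_i$ in $L^q$ provided $w_i^{2/\beta-1}\in L^{2q/(2-q)}$, i.e.\ $u_i\in L^{(2-\beta)q/(2-q)}(\Omega_T)$. Requiring $(2-\beta)q/(2-q)=\beta+2/d$ gives exactly $q=(\beta d+2)/(d+1)$. The condition $q\ge 1$ is $\beta\ge 1-1/d$, which is where the hypotheses on $\beta$ enter. Together with Step 1, this yields $u_i^{(\tau)}\in L^q(0,T;W^{1,q}(\Omega))$.

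\emph{Step 3: the velocity bounds.} For the $\eps$-independent $L^2$ bound of $v_i^{(\tau)}$, I would use \eqref{2.KLeta} together with the bound $\|K_\eps^\eta(\na(u_i^{(\tau)})^\beta)\|_{L^2(\Omega_T)}\le C$ from Lemma \ref{lem.aei1}. Linearity of $K_\eps^\eta$ and $p_i(u)=\sum_j a_{ij}u_j^\beta$ give $\|K_\eps^\eta(\na p_i)\|_{L^2}\le C\sum_j\|K_\eps^\eta(\na u_j^\beta)\|_{L^2}$, and \eqref{2.KLeta} yields $\|v_i\|_{L^2(\Omega)}\le C\|K_\eps^\eta(g)\|_{L^2}$ with $C$ independent of $\eps$. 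For the $\eps$-dependent bounds, I would invoke Assumption (A4) (rescaled by $\eps$), which gives $\|v_i\|_{W^{1,p}(\Omega)}\le C(\eps)\|p_i(u)\|_{L^p(\Omega)}$ uniformly in $\eta$. Take $p=(\beta+2/d)/\beta$, so that $\|u^\beta\|_{L^p}=\|u\|_{L^{\beta+2/d}}^\beta$. Step 1 then gives $v_i\in L^{p}(0,T;W^{1,p}(\Omega))$ with $p=(\beta d+2)/(\beta d)$, matching the time exponent in the statement. The Sobolev embedding $W^{1,p}\hookrightarrow L^\gamma$ gives $\gamma=dp/(d-p)=d(\beta d+2)/(\beta d^2-\beta d-2)$ for $d\ge3$. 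For $d=2$ we have $p=(\beta+1)/\beta>2$ when $\beta<1$, so $\gamma=\infty$. The borderline $\beta=1/2$ gives $p=3>2$ as well, so the only difference there is in the interpolation for $r$.

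\emph{Step 4: the exponent $r$.} This comes from interpolating $v_i\in L^p(0,T;L^\gamma)$ with $v_i\in L^\infty(0,T;W^{1,1/\beta}(\Omega))$. The latter follows from the $L^\infty(0,T;L^1)$ bound via $u^\beta\in L^{1/\beta}$, again by (A4). For $d=2$, $W^{1,1/\beta}\hookrightarrow L^{\kappa}$ with $\kappa=2/(2\beta-1)$ if $\beta>1/2$, and $\kappa<\infty$ arbitrary if $\beta=1/2$. The standard mixed-norm interpolation between $L^p_tL^\infty_x$ and $L^\infty_tL^\kappa_x$ gives $L^r(\Omega_T)$ with $r=p+\kappa=1+1/\beta+2/(2\beta-1)$. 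For $\beta=1/2$ this gives arbitrary finite $r$. For $d\ge3$ the analogous computation should produce $r=(\beta d+2)/(\beta d-1)$.

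I expect Step 4 to be the main obstacle: checking the exponent bookkeeping in the interpolation and confirming that (A4) gives $\eta$-uniform bounds in the needed $L^p$ scales. This includes the case $p=1/\beta$ and the $d\ge 3$ formula, where I would recompute the Lebesgue exponent of the embedding carefully.
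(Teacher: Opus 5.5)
Your proposal is correct and follows the paper's proof essentially step by step. The steps match: Gagliardo--Nirenberg with $\theta s=2$ against the $L^\infty(0,T;L^{2/\beta})$ bound, the chain rule plus H\"older for $\na u_i^{(\tau)}$, (A4) giving $v_i^{(\tau)}$ in $L^{P}(0,T;W^{1,P}(\Omega))$ with $P=(\beta d+2)/(\beta d)$ and in $L^\infty(0,T;W^{1,1/\beta}(\Omega))$, then Lebesgue interpolation. The $d\ge 3$ bookkeeping you left open does close: with $\kappa=d/(\beta d-1)$, the condition $r(1-\theta)=P$ forces $r\theta=\kappa P/d$, hence $r=P(1+\kappa/d)=(\beta d+2)/(\beta d-1)$, as stated.
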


\begin{proof}
We infer from the Gagliardo--Nirenberg inequality with $\theta = (\beta-2/p)/(\beta+2/d-1)$ that
\begin{align}\label{3.GN1}
  \|(u_i^{(\tau)})^{\beta/2}\|_{L^p(\Omega_T)}^p
  &\le C\int_0^T\|(u_i^{(\tau)})^{\beta/2}\|_{H^1(\Omega)}^{p\theta}
  \|(u_i^{(\tau)})^{\beta/2}\|_{L^{2/\beta}(\Omega)}^{p(1-\theta)}dt \\
  &\le C\|u_i^{(\tau)}\|_{L^\infty(0,T;L^1(\Omega))}^{p\beta(1-\theta)/2}
  \int_0^T\|(u_i^{(\tau)})^{\beta/2}\|_{H^1(\Omega)}^{p\theta}dt. \nonumber 
\end{align}
The right-hand side is bounded uniformly in $(\eps,\eta,\tau)$ if $p\theta=2$, which gives $p=2+4/(\beta d)$. Consequently, $u_i^{(\tau)}$ is uniformly bounded in $L^{p\beta/2}(\Omega_T) = L^{\beta + 2/d}(\Omega_T)$ and $(u_i^{(\tau)})^{1-\beta/2}$ is uniformly bounded in $L^{p\beta/(2-\beta)}(\Omega_T)$. 
Then
\begin{align*}
  \na u_i^{(\tau)} = (2/\beta)(u_i^{(\tau)})^{1-\beta/2}
  \na (u_i^{(\tau)})^{\beta/2}
\end{align*}
is uniformly bounded in $L^q(\Omega_T)$, where $1/q=(2-\beta)/(p\beta) + 1/2$ and hence $q = (\beta d+2)/(d+1)$. It holds $q\ge 1$ if $\beta\ge 1-1/d$. The bound for $(u_i^{(\tau)})^{\beta/2}$ in $L^{p}(\Omega_T)$ implies that $(u_i^{(\tau)})^\beta$ is uniformly bounded in $L^{p/2}(\Omega_T)$ and $\na (u_i^{(\tau)})^\beta$ is bounded in $L^{p/2}(0,T;W^{-1,p/2}(\Omega))$. We conclude from elliptic regularity  (Assumption (A4)), that $v_i^{(\tau)}=L_\eps^\eta(-\na p_i(u^{(\tau)}))$ is bounded in $L^{p/2}(0,T;W^{1,p/2}(\Omega))$ uniformly in $\eta$.

Now, we distinguish the cases $d=2$ and $d\ge 3$. Let first $d=2$. Because of the embedding $W^{1,p/2}(\Omega)$ $\hookrightarrow L^\infty(\Omega)$ if $p/2=1+2/(\beta d)>d$ (which holds true if $\beta<1$ and $d=2$), $v_i^{(\tau)}$ is also uniformly bounded in $L^{p/2}(0,T;L^\infty(\Omega))$. We already noticed that $\na(u_i^{(\tau)})^\beta$ is uniformly bounded in $L^\infty(0,T;W^{-1,1/\beta}(\Omega))$ (this holds in any space dimension). Hence, under Assumption (A4), $v_i^{(\tau)}$ is uniformly bounded in $L^\infty(0,T;W^{1,1/\beta}(\Omega))$. If $\beta=1/2$, $(v_i^{(\tau)})$ is bounded in $L^\infty(0,T;L^r(\Omega))$ for all $r<\infty$. If $\beta>1/2$, we apply the Gagliardo--Nirenberg inequality with $\theta=2/(r(2\beta-1))$:
\begin{align*}
  \|v_i^{(\tau)}\|^r_{L^r(\Omega_T)}
  &\le C\|v_i^{(\tau)}\|_{L^\infty(0,T;W^{1,1/\beta}(\Omega))}^{r\theta}
  \int_0^T\|v_i^{(\tau)}\|_{L^\infty(\Omega)}^{r(1-\theta)}dt.
\end{align*}
The right-hand side is bounded if $r(1-\theta)=p/2$, which is equivalent to $r=(\beta+1)/\beta + 2/(2\beta-1)>4$. 

Next, let $d\ge 3$. The embedding $W^{1,p/2}(\Omega)\hookrightarrow L^\gamma(\Omega)$ with $\gamma = d(d\beta+2)/(\beta d^2-\beta d-2)$ implies that $(v_i^{(\tau)})$ is bounded in $L^{p/2}(0,T;L^\gamma(\Omega))$. Again by the Gagliardo--Nirenberg inequality,
\begin{align*}
  \|v_i^{(\tau)}\|^r_{L^r(\Omega_T)}
  &\le C\|v_i^{(\tau)}\|_{L^\infty(0,T;W^{1,1/\beta}(\Omega))}^{r\theta}
  \int_0^T\|v_i^{(\tau)}\|_{L^\gamma(\Omega)}^{r(1-\theta)}dt.
\end{align*}
This expression is uniformly bounded if $r(1-\theta)=p/2$, which yields after a computation $r = (\beta d+2)/(\beta d-1)$. This ends the proof.
\end{proof}

The estimates of Lemma \ref{lem.eta2} imply a uniform bound for $u_i^{(\tau)}v_i^{(\tau)}$ in some $L^s(\Omega_T)$ with $s\ge 1$ only in the two-dimensional case. Indeed, the $L^{\beta+2/d}(\Omega_T)$ bound for $u_i^{(\tau)}$ and the $L^r(\Omega_T)$ bound for $v_i^{(\tau)}$ show that $(u_i^{(\tau)}v_i^{(\tau)})$ is bounded in $L^s(\Omega_T)$ with $1/s = d/(\beta d+2) + 1/r$, which yields $s>1$ for $d=2$ and $1/2\le\beta<1$. However, if $d\ge 3$, we obtain $s = (\beta d+2)/(\beta d+d-1)$, and this expression is larger than one if and only if $d<3$, contradiction. Thus, let $d=2$ in the following.

We derive a uniform bound for the discrete time derivative. Let $\beta>1/2$. Then $q=\frac23(\beta+1)>1$. As $(u_i^{(\tau)})$ is bounded in $L^{\beta+1}(\Omega_T)$ and $(v_i^{(\tau)})$ is bounded in $L^r(\Omega_T)$, $(u_i^{(\tau)}v_i^{(\tau)})$ is bounded in $L^s(\Omega_T)$ with $s=(\beta+1)r/(\beta+r+1)>q$. We choose $q'=q/(q-1)$ and $\phi_i\in L^{q'}(0,T;W^{1,q'}(\Omega))$. Then
\begin{align*}
  \bigg|\frac{1}{\tau}\int_0^T\int_\Omega(u_i^{(\tau)}-\pi_\tau
  u_i^{(\tau)})\phi_i dxdt\bigg|
  &\le \sigma_i\|\na u_i^{(\tau)}\|_{L^q(\Omega_T)}
  \|\na\phi_i\|_{L^{q'}(\Omega_T)} \\
  &\phantom{xx}+ \|u_i^{(\tau)}v_i^{(\tau)}\|_{L^q(\Omega_T)}
  \|\na\phi_i\|_{L^{q'}(\Omega_T)}
  \le C\|\na\phi_i\|_{L^{q'}(\Omega_T)}.
\end{align*}
This shows that 
\begin{align}\label{3.time2}
  \tau^{-1}\|u_i^{(\tau)}-\pi_\tau u_i^{(\tau)}
  \|_{L^{q}(0,T;W^{1,q'}(\Omega)')} \le C.
\end{align}

\subsection{Limit $(\eta,\tau)\to 0$}

Thanks to the gradient bound for $u_i^{(\tau)}$ from Lemma \ref{lem.eta2} and the time-discrete estimate \eqref{3.time2}, we can apply the Aubin--Lions lemma in the version of \cite{DrJu12} to find a subsequence that is not relabeled such that, as $(\eta,\tau)\to 0$,
\begin{align*}
  u_i^{(\tau)}\to u_i\quad\mbox{strongly in }L^q(\Omega_T),\quad
  q = \tfrac23(\beta+1)>1.
\end{align*}
We conclude from Lemma \ref{lem.eta2} for a subsequence that
\begin{align*}
  v_i^{(\tau)}\rightharpoonup v_i 
  \quad\mbox{weakly in }L^r(\Omega_T), \quad r>4.
\end{align*}
It follows from \eqref{3.time2} that, up to a subsequence,
\begin{align*}
  \pa_t u_i^{(\tau)}\rightharpoonup\pa_t u_i
  \quad\mbox{weakly in }L^{q}(0,T;W^{1,q'}(\Omega)'),
\end{align*}
and the following convergence follows from Lemma \ref{lem.eta2}:
\begin{align*}
  \na u_i^{(\tau)}\rightharpoonup\na u_i 
  \quad\mbox{weakly in }L^q(\Omega_T).
\end{align*}
The uniform bound in $L^{\beta+1}(\Omega_T)$ shows that
\begin{align*}
  u_i^{(\tau)}\to u_i \quad\mbox{strongly in }L^\rho(\Omega_T)
  \mbox{ for }\tfrac32\le\rho<\beta+1.
\end{align*}
In particular, we have $p_i(u^{(\tau)})\to p_i(u)$ strongly in $L^{\rho/\beta}(\Omega_T)$. Since $1/r+1/\rho<1/4+2/3 < 1$, 
\begin{align*}
  u_i^{(\tau)}v_i^{(\tau)}\rightharpoonup u_iv_i
  \quad\mbox{weakly in }L^1(\Omega_T). 
\end{align*}
Thus the limit $(\eta,\tau)\to 0$ in \eqref{3.tau} leads to
\begin{align*}
  \int_0^T\langle\pa_t u_i,\phi_i\rangle dt
  + \sigma_i\int_0^T\int_\Omega\na u_i\cdot\na\phi_i dxdt
  = \int_0^T\int_\Omega u_iv_i\cdot\na\phi_i dxdt
\end{align*}
for all $\phi_i\in L^{q'}(0,T;W^{1,q'}(\Omega))$, where $\langle\cdot,\cdot\rangle$ is the dual product in $W^{1,q}(\Omega)'\times W^{1,q'}(\Omega)$. 

It remains to identify the limit $v_i$. The following arguments hold for $\beta\ge 1/2$. We deduce from \eqref{2.KLeta} that
\begin{align*}
  \sqrt{\eta}\|v_i^{(\tau)}\|_{H^m(\Omega)}
  + \sqrt{\eps}\|\na v_i^{(\tau)}\|_{L^2(\Omega)}
  + \|v_i^{(\tau)}\|_{L^2(\Omega)}
  \le C\|K_\eps(\na p_i(u^{(\tau)}))\|_{L^2(\Omega)}.
\end{align*}
It follows from an integration over time and Lemma \ref{lem.aei1} that
\begin{align*}
  \sqrt{\eta}\|v_i^{(\tau)}\|_{L^{2}(0,T;H^m(\Omega))}
  + \|v_i^{(\tau)}\|_{L^{2}(0,T;H^1(\Omega))} \le C(\eps).
\end{align*}
Therefore, for a subsequence,
\begin{align*}
  \eta v_i^{(\tau)}\to 0
  &\quad\mbox{strongly in }L^{2}(0,T;H^m(\Omega)), \\
  v_i^{(\tau)}\rightharpoonup v_i
  &\quad\mbox{weakly in }L^{2}(0,T;H^1(\Omega)).
\end{align*}
Then, together with the strong convergence $p_i(u^{(\tau)})\to p_i(u)$ in $L^{\rho/\beta}(\Omega_T)$, the limit $(\eta,\tau)\to 0$ in 
\begin{align*}
  \int_0^T\int_\Omega\bigg(\eta\sum_{|\alpha|=m}
  D^\alpha v_i^{(\tau)}\cdot D^\alpha\phi_i
  + \eps\na v_i^{(\tau)}:\na\phi_i + v_i^{(\tau)}\cdot\phi_i\bigg)dxdt
  = \int_0^T\int_\Omega p_i(u^{(\tau)})\diver\phi_i dxdt
\end{align*}
for $\phi_i\in L^{\rho/(\rho-\beta)}(0,T;H^m(\Omega;\R^d))$ yields
\begin{align*}
  \int_0^T\int_\Omega(\eps\na v_i:\na\phi_i + v_i\cdot\phi_i)dxdt
  = \int_0^T\int_\Omega p_i(u)\diver\phi_i dxdt,
\end{align*}
We deduce from the weak convergence $v_i^{(\tau)}\rightharpoonup v_i$ in $L^2(0,T;H^1(\Omega))$ and $v_i^{(\tau)}=0$ on $\pa\Omega$ that $v_i=0$ on $\pa\Omega$ (in the sense of traces). We conclude that $v_i=L_\eps(-\na p_i(u))$. Finally, the property $u_i\in W^{1,q}(0,T;W^{1,q'}(\Omega)') \hookrightarrow C^0([0,T];W^{1,q'}(\Omega)')$ implies that $u_i(0)=u_i^0$ in the sense of $W^{1,q'}(\Omega)'$. This ends the proof of Theorem \ref{thm.ex2}.

\subsection{Case $\beta=1/2$, $d=2$}\label{sec.12}

Lemma \ref{lem.eta2} implies that $(u_i^{(\tau)})$ is bounded in the non-reflexive space $L^1(0,T;W^{1,1}(\Omega))$, and there exists $C>0$ independent of $(\eta,\tau)$ such that
\begin{align*}
  \tau^{-1}\|u_i^{(\tau)}-\pi_\tau u_i^{(\tau)}
  \|_{L^1(0,T;W^{1,\infty}(\Omega)')} \le C.
\end{align*}
Since $L^1(0,T;W^{1,\infty}(\Omega)')$ is not reflexive, we cannot extract a weakly convergent subsequence. However, we can still apply the Aubin--Lions lemma to find that (for a subsequence)
\begin{align*}
  u_i^{(\tau)}\to u_i\quad\mbox{strongly in }L^1(\Omega_T)
  \quad\mbox{as }(\eta,\tau)\to 0,
\end{align*}
and this convergence also holds in $L^\rho(\Omega_T)$ with $\rho<\beta+1=3/2$. In particular, $p_i(u^{(\tau)})\to p_i(u)$ strongly in $L^{2\rho}(\Omega_T)$. We write the weak formulation \eqref{3.tau} as
\begin{align*}
  \frac{1}{\tau}\int_{\tau}^{T-\tau}&\int_\Omega u_i^{(\tau)}
  (\phi_i - \pi_{-\tau}\phi_i) dxdt
  + \frac{1}{\tau}\int_0^\tau\int_\Omega
  u_i^{(\tau)}(t)\phi_i(t)dxdt \\
  &- \sigma_i\int_0^T\int_\Omega u_i^{(\tau)}\Delta\phi_i dxdt
  = \int_0^T\int_\Omega u_i^{(\tau)} v_i^{(\tau)}\cdot\na\phi_i dxdt
\end{align*}
for piecewise constant in time functions $\phi_i$ with values in $W^{1,\infty}(\Omega)\cap H^2(\Omega)$ satisfying $\phi_i(t)=0$ for $t>T-\tau$ and $\na\phi_i\cdot\nu=0$ on $\pa\Omega$. The limit $(\eta,\tau)\to 0$ yields
\begin{align*}
  -\int_0^T\int_\Omega u_i\pa_t\phi_i dxdt 
  - \int_\Omega u_i^0\phi_i(0)dx
  - \sigma_i\int_0^T\int_\Omega u_i\Delta\phi_i dxdt
  = \int_0^T\int_\Omega u_i v_i\cdot\na\phi_i dxdt
\end{align*}
for functions $\phi_i\in C_0^\infty(\Omega\times[0,T))$ with $\na\phi_i\cdot\nu=0$ on $\pa\Omega$. The proof is finished.


\section{Case $2d/(d+2)<\beta<2$}\label{sec.beta3}

We turn to the proof of Theorem \ref{thm.ex3}. As in Section \ref{sec.beta2}, we define the time-discrete approximate scheme \eqref{3.approx2}, where $v_i^k=L_\eps^\eta(-\na p_i(u^k))$. The existence of a weak solution $u^k$ to \eqref{3.approx2} is shown as in Section \ref{sec.beta2}, using the fact that for given $u^k\in L^2(\Omega_T;\R^n)$, we have $(u_j^k)^\beta\in L^{2/\beta}(\Omega)\hookrightarrow H^m(\Omega)'$ (here we need that $2/\beta\ge 1$ and $m>d/2$) and hence $v_i^k=L_\eps^\eta(-\na p_i(u^k))\in H^m(\Omega)\hookrightarrow L^\infty(\Omega)$. The approximate entropy inequality can be proved as in Lemma \ref{lem.aei1} with the exception that we use the test function $\phi_i=\beta (u_i^k+\mu)^{\beta-1}-1$ with $\mu>0$. The regularization with $\mu>0$ is still needed to obtain $\phi_i\in H^1(\Omega)$. The result reads as follows.

\begin{lemma}
Let $u^k$ be a weak solution to \eqref{3.approx2}. Then 
\begin{align*}
  \frac{1}{\tau}&\sum_{i=1}^n\int_\Omega
  \big((u_i^k)^\beta-u_i^k\big)dx
  + \frac{4}{\beta}(\beta-1)\sum_{i=1}^n\int_\Omega\sigma_i
  |\na(u_i^k)^{\beta/2}|^2dx \\
  &+ \alpha(\beta-1)\sum_{i=1}^n\int_\Omega 
  |K_\eps^\eta(\na(u_i^k)^\beta)|^2 dx
  \le \frac{1}{\tau}\sum_{i=1}^n\int_\Omega
  \big((u_i^{k-1})^\beta-u_i^{k-1}\big)dx.
\end{align*}
\end{lemma}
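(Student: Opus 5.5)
We mimic the proof of Lemma \ref{lem.aei1}, now with the test function $\phi_i=\beta(u_i^k+\mu)^{\beta-1}-1$ for $\mu>0$. First we check admissibility. For $1<\beta<2$ the function $z\mapsto(z+\mu)^{\beta-1}$ is Lipschitz on $[0,\infty)$ with constant $(\beta-1)\mu^{\beta-2}$. Since $u_i^k\in H^1(\Omega)$ (the approximate solution is bounded and has gradient in $L^2$ by the $L^\infty$ bound for $v_i^k$), we get $\phi_i\in H^1(\Omega)$. Inserting $\phi_i$ into \eqref{3.approx2} and summing over $i$ gives $0=I_1+I_2+I_3$, where:
\begin{align*}
  I_1 &= \frac1\tau\sum_{i=1}^n\int_\Omega(u_i^k-u_i^{k-1})\big(\beta(u_i^k+\mu)^{\beta-1}-1\big)dx,\\
  I_2 &= \sum_{i=1}^n\sigma_i\beta(\beta-1)\int_\Omega(u_i^k+\mu)^{\beta-2}|\na u_i^k|^2dx,\\
  I_3 &= -\beta(\beta-1)\sum_{i=1}^n\int_\Omega u_i^k(u_i^k+\mu)^{\beta-2}v_i^k\cdot\na u_i^k dx.
\end{align*}

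For $I_1$ we use convexity of $z\mapsto(z+\mu)^\beta-z$, since $\beta>1$. Its derivative is exactly the test function, so
\[
I_1\ge\frac1\tau\sum_{i=1}^n\int_\Omega\big[((u_i^k+\mu)^\beta-u_i^k)-((u_i^{k-1}+\mu)^\beta-u_i^{k-1})\big]dx.
\]
For $I_2$ we rewrite the integrand with the factor $(u_i^k/(u_i^k+\mu))^{2-\beta}$ and the identity $\beta(\beta-1)(u_i^k)^{\beta-2}|\na u_i^k|^2=\frac4\beta(\beta-1)|\na(u_i^k)^{\beta/2}|^2$. Since $(u_i^k+\mu)^{\beta-2}\le (u_i^k)^{\beta-2}$, we have $(u_i^k+\mu)^{\beta-2}|\na u_i^k|^2=(u_i^k/(u_i^k+\mu))^{2-\beta}(u_i^k)^{\beta-2}|\na u_i^k|^2$. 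Fatou's lemma, or monotone convergence, then gives the lower bound $\frac4\beta(\beta-1)\sum_i\sigma_i\int_\Omega|\na(u_i^k)^{\beta/2}|^2dx$ as $\mu\to0$. Here the gradient-type quantity is only on the favourable side, so no a priori integrability is needed.

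For $I_3$ we write $I_3=-(\beta-1)\sum_i\int_\Omega (u_i^k/(u_i^k+\mu))^{2-\beta}v_i^k\cdot\na(u_i^k)^\beta dx$. This step needs $\na(u_i^k)^\beta=\beta(u_i^k)^{\beta-1}\na u_i^k\in L^2(\Omega)$, which holds because $\beta>1$ and $u_i^k\in L^\infty(\Omega)\cap H^1(\Omega)$; it is easier here than for $\beta<1$. Together with $v_i^k\in L^\infty(\Omega)$, dominated convergence gives the limit $-(\beta-1)\sum_i\int_\Omega v_i^k\cdot\na(u_i^k)^\beta dx$. The terms in $I_1$ converge by dominated convergence, since $u_i^k,u_i^{k-1}\in L^\infty(\Omega)\subset L^\beta(\Omega)$.

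Finally, we use $v_i^k=L_\eps^\eta(-\na p_i(u^k))$ with $p_i(u^k)=\sum_ja_{ij}(u_j^k)^\beta$, together with $L_\eps^\eta=K_\eps^\eta\circ K_\eps^\eta$ and self-adjointness:
\[
-\sum_i\int_\Omega v_i^k\cdot\na(u_i^k)^\beta dx=\sum_{i,j}a_{ij}\int_\Omega K_\eps^\eta(\na(u_j^k)^\beta)\cdot K_\eps^\eta(\na(u_i^k)^\beta)dx\ge\alpha\sum_i\int_\Omega|K_\eps^\eta(\na(u_i^k)^\beta)|^2dx.
\]
Multiplying by $\beta-1>0$ and collecting terms yields the claim.

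The main obstacle is the justification of the $\mu\to0$ limit, specifically the integrability of $v_i^k\cdot\na(u_i^k)^\beta$. I expect it to be harmless because the approximate solution is bounded in $L^\infty(\Omega)$, uniformly in $\delta$ for fixed $\eta$, so it is in particular bounded for fixed $\eta$.
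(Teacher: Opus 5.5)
Your proof is correct and follows the paper's route. The paper also repeats the argument of Lemma \ref{lem.aei1} with the regularized test function $\phi_i=\beta(u_i^k+\mu)^{\beta-1}-1$, uses convexity for the discrete time term, passes to the limit $\mu\to 0$, and concludes with $L_\eps^\eta=K_\eps^\eta\circ K_\eps^\eta$ and the positive definiteness of $(a_{ij})$; your monotone-convergence treatment of the diffusion term, which avoids needing $\na(u_i^k)^{\beta/2}\in L^2(\Omega)$ a priori, is a small tightening of the paper's ``arguing similarly'' step.
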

Thus, with the piecewise constant in time functions $u_i^{(\tau)}$ and $v_i^{(\tau)}$, defined in \eqref{3.utau}, we obtain the following bounds:
\begin{equation}\label{3.beta2}
\begin{aligned}
  \|u_i^{(\tau)}\|_{L^\infty(0,T;L^\beta(\Omega))}
  + \|(u_i^{(\tau)})^{\beta/2}\|_{L^2(0,T;H^1(\Omega))} &\le C, \\
  \|K_\eps^\eta(\na (u_i^{(\tau)})^\beta)\|_{L^2(\Omega_T)} 
  + \|v_i^{(\tau)}\|_{L^2(\Omega_T)} &\le C, 
\end{aligned}
\end{equation}
where $C>0$ is independent of $(\eps,\eta,\tau)$, and the last bound follows from \eqref{2.KLeta}. Since $\beta>1$, the subsequent estimates differ from those in Lemma \ref{lem.eta2}.

\begin{lemma}\label{lem.eta3}
Let $q=\beta(d+2)/(\beta+d)>1$. Then there exists a constant $C>0$ independent of $\tau$ and $\eta$ such that
\begin{align*}
  \|u_i^{(\tau)}\|_{L^{\beta(d+2)/d}(\Omega_T)}
  + \|u_i^{(\tau)}\|_{L^q(0,T;W^{1,q}(\Omega))}
  \le C.
\end{align*}
\end{lemma}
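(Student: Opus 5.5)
We mimic the proof of Lemma \ref{lem.eta2}, replacing the $L^\infty(0,T;L^1(\Omega))$ bound by the better bound for $u_i^{(\tau)}$ in $L^\infty(0,T;L^\beta(\Omega))$ from \eqref{3.beta2}. Set $w=(u_i^{(\tau)})^{\beta/2}$. By \eqref{3.beta2}, $w$ is uniformly bounded in $L^\infty(0,T;L^2(\Omega))\cap L^2(0,T;H^1(\Omega))$. The standard parabolic Gagliardo--Nirenberg interpolation gives
\begin{align*}
  \|w\|_{L^p(\Omega_T)}^p \le C\int_0^T\|w\|_{H^1(\Omega)}^{p\theta}
  \|w\|_{L^2(\Omega)}^{p(1-\theta)}dt, \quad
  \theta=\frac{d}{2}-\frac{d}{p}.
\end{align*}
Choosing $p\theta=2$ yields $p=2(d+2)/d$. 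Hence $w$ is uniformly bounded in $L^{2(d+2)/d}(\Omega_T)$, that is, $u_i^{(\tau)}$ is uniformly bounded in $L^{\beta(d+2)/d}(\Omega_T)$. The constant is independent of $(\eps,\eta,\tau)$, since only the entropy bounds enter.

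For the gradient we write $\na u_i^{(\tau)}=(2/\beta)(u_i^{(\tau)})^{1-\beta/2}\na w$, with $\na w$ bounded in $L^2(\Omega_T)$. Two cases arise.

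\emph{Case $\beta<2$.} The factor $(u_i^{(\tau)})^{1-\beta/2}$ is bounded in $L^{s}(\Omega_T)$ with $s=\beta(d+2)/(d(1-\beta/2))=2\beta(d+2)/(d(2-\beta))$. Hölder's inequality then gives $\na u_i^{(\tau)}\in L^q(\Omega_T)$ with
\begin{align*}
  \frac1q=\frac12+\frac{d(2-\beta)}{2\beta(d+2)}
  =\frac{\beta(d+2)+d(2-\beta)}{2\beta(d+2)}=\frac{\beta+d}{\beta(d+2)},
\end{align*}
which is exactly the claimed $q=\beta(d+2)/(\beta+d)$. The condition $q>1$ is equivalent to $\beta(d+1)>d$. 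This holds under the standing hypothesis $\beta>\max\{1,2d/(d+2)\}$, because $\beta>1>d/(d+1)$.

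\emph{Lower-order part.} The $L^q(\Omega_T)$ norm of $u_i^{(\tau)}$ itself is controlled by the $L^{\beta(d+2)/d}(\Omega_T)$ bound, since $q\le\beta(d+2)/d$ and $\Omega_T$ is bounded. Together with the gradient bound this gives $u_i^{(\tau)}$ bounded in $L^q(0,T;W^{1,q}(\Omega))$.

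The only subtle point is rigor: the chain rule for $\na u_i^{(\tau)}$ in terms of $\na w$ must be justified, as must the application of Gagliardo--Nirenberg to $w(t)\in H^1(\Omega)$ at each time. Both are available at the discrete level. From the approximate problem we already know $u_i^k\in H^1(\Omega)\cap L^\infty(\Omega)$ for fixed $\eta$. Since $\beta/2<1$, the function $w=(u_i^k)^{\beta/2}\in H^1(\Omega)$, and the identity $\na u=(2/\beta)u^{1-\beta/2}\na w$ holds a.e., being trivially true on $\{u=0\}$ where both sides vanish. No bound on $v_i^{(\tau)}$ is needed, so Assumption (A4) does not enter. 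The resulting constant is uniform in $\tau$ and $\eta$, and in fact also in $\eps$.
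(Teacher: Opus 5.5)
Your argument is correct and follows the paper's proof: the same Gagliardo--Nirenberg step with $p=2+4/d$, $\theta=d/2-d/p$, using the $L^\infty(0,T;L^\beta(\Omega))$ and $L^2(0,T;H^1(\Omega))$ bounds from \eqref{3.beta2}, and the same H\"older step on $\na u_i^{(\tau)}=(2/\beta)(u_i^{(\tau)})^{1-\beta/2}\na(u_i^{(\tau)})^{\beta/2}$, giving $q=\beta(d+2)/(\beta+d)$. One correction to your rigor remark: $\beta/2<1$ is the wrong reason for $(u_i^k)^{\beta/2}\in H^1(\Omega)$, since $z\mapsto z^{\beta/2}$ is not Lipschitz at $0$ and $u\in H^1\cap L^\infty$ alone does not suffice; that regularity comes from the approximate entropy inequality, i.e.\ \eqref{3.beta2}, whereas the chain rule itself is fine because $z\mapsto z^{2/\beta}$ is $C^1$ for $\beta<2$.
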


\begin{proof}
We apply the Gagliardo--Nirenberg inequality with $p=2+4/d$ and $\theta=d/2-d/p$:
\begin{align}\label{3.GN}
  \|(u_i^{(\tau)})^{\beta/2}\|_{L^p(\Omega_T)}^p
  &\le C\int_0^T\|(u_i^{(\tau)})^{\beta/2}\|_{H^1(\Omega)}^{p\theta}
  \|(u_i^{(\tau)})^{\beta/2}\|_{L^2(\Omega)}^{p(1-\theta)}dt \\
  &\le C\|u_i^{(\tau)}\|_{L^\infty(0,T;
  L^\beta(\Omega))}^{\beta p(1-\theta)/2}
  \int_0^T\|(u_i^{(\tau)})^{\beta/2}\|_{H^1(\Omega)}^2 dt, \nonumber 
\end{align}
and the right-hand side is bounded in view of \eqref{3.beta2}. This shows that $(u_i^{(\tau)})$ is bounded in $L^{\beta p/2}(\Omega_T) = L^{\beta(d+2)/d}(\Omega_T)$. We infer that
\begin{align*}
  \na u_i^{(\tau)} = (2/\beta)(u_i^{(\tau)})^{1-\beta/2}
  \na(u_i^{(\tau)})^{\beta/2}
\end{align*} 
is uniformly bounded in $L^q(\Omega_T)$, where $1/q = (2-\beta)/(\beta p) + 1/2$ or $q = \beta(d+2)/(d+\beta)>1$. We infer uniform bounds for $(u_i^{(\tau)})^\beta$ in $L^{1+2/d}(\Omega_T)$ and for $\na(u_i^{(\tau)})^\beta$ in $L^{1+2/d}(0,T;$ $W^{-1,1+2/d}(\Omega))$. 
\end{proof}

We claim that our estimates are sufficient to find that $(u_i^{(\tau)}v_i^{(\tau)})$ is bounded in some $L^s(\Omega_T)$ with $s>1$. In view of the bounds for $v_i^{(\tau)}$ in $L^2(\Omega_T)$ and for $u_i^{(\tau)}$ in $L^{\beta(d+2)/d}(\Omega_T)$, this is the case if $d/(\beta(d+2)) + 1/2 = 1/s < 1$, which is equivalent to $\beta>2d/(d+2)$. This is true if $d=2$, but it gives the restriction $2d/(d+2)<\beta<2$ for $d\ge 3$. 

We derive a uniform bound for the discrete time derivative. Let $\phi_i\in L^{s'}(0,T;W^{1,s'}(\Omega))$, where $s'=s/(s-1)$ and $s=2\beta(d+2)/(\beta d + 2\beta + 2d)>1$. Then, since $s<q$,
\begin{align*}
  \bigg|\frac{1}{\tau}\int_0^T\int_\Omega(u_i^{(\tau)}
  - \pi_\tau u_i^{(\tau)})\phi_i dxdt\bigg|
  &\le \sigma_i\|\na u_i^{(\tau)}\|_{L^s(\Omega_T)}
  \|\na\phi_i\|_{L^{s'}(\Omega_T)} \\
  &\phantom{xx}+ \|u_i^{(\tau)}v_i^{(\tau)}\|_{L^s(\Omega_T)}
  \|\na\phi_i\|_{L^{s'}(\Omega_T)} 
  \le C\|\na\phi_i\|_{L^{s'}(\Omega_T)},
\end{align*} 
and we conclude that $\tau^{-1}(u_i^{(\tau)}-\pi_\tau u_i^{(\tau)})$ is uniformly bounded in $L^s(0,T;W^{1,s'}(\Omega)')$. Thus, we can apply the Aubin--Lions lemma in the version of \cite{DrJu12} to find a subsequence (not relabeled) such that, as $(\eta,\tau)\to 0$,
\begin{align*}
  u_i^{(\tau)} \to u_i \quad\mbox{strongly in }L^s(\Omega_T),
  \quad s>1.
\end{align*}
The uniform bound for $u_i^{(\tau)}$ in $L^{\beta(d+2)/d}(\Omega_T)$ implies that this convergence also holds in $L^\rho(\Omega_T)$ for all $\rho<\beta(d+2)/d$. Furthermore, up to a subsequence,
\begin{align*}
  \na u_i^{(\tau)}\rightharpoonup\na u_i
  &\quad\mbox{weakly in }L^q(\Omega_T), \\
  \pa_t u_i^{(\tau)}\rightharpoonup\pa_t u_i
  &\quad\mbox{weakly in }L^s(0,T;W^{1,s'}(\Omega)'), \\
  v_i^{(\tau)}\rightharpoonup v_i
  &\quad\mbox{weakly in }L^2(\Omega_T). 
\end{align*}
Since it holds $\rho>2$ (a consequence of $\beta>2d/(d+2)$), the strong convergence of $u_i^{(\tau)}$ in $L^\rho(\Omega_T)$ and the weak convergence of $v_i^{(\tau)}$ in $L^2(\Omega_T)$ imply that 
\begin{align*}
  u_i^{(\tau)}v_i^{(\tau)}\rightharpoonup u_iv_i
  \quad\mbox{weakly in }L^1(\Omega_T),
\end{align*}
and thanks to the uniform bound of $u_i^{(\tau)}v_i^{(\tau)}$ in $L^s(\Omega_T)$, this convergence also holds in $L^s(\Omega_T)$, where $s>1$. The limit $v_i$ and the initial data are identified as in Section \ref{sec.beta2}. 


\section{Case $\beta\ge 2$}\label{sec.beta4}

In contrast to the previous sections, we need a truncation in the approximate scheme to prove Theorem \ref{thm.ex4}. Indeed, choosing $y\in L^2(\Omega_T;\R^n)$, the term $y_j^\beta$ may be not integrable if $\beta$ is large, and $v_i=L_\eps^\eta(-\na p_i(y))$ may be not defined. For this reason, we introduce some cutoff functions. Let $N>1$ and set $(z)_+^N:=\max\{0,\min\{N,z\}\}$ for $z\in\R$. We approximate $z^{\beta-1}$ by
\begin{align*}
  S_N^{\beta-1}(z) := (\beta-1)\int_0^z[(s)_+^N]^{\beta-2}ds
  = \begin{cases}
  0 &\mbox{if }z\le 0, \\
  z^{\beta-1} &\mbox{if }0<z\le N, \\
  N^{\beta-1} + (\beta-1) N^{\beta-2}(z-N) &\mbox{if }z>N.
  \end{cases}
\end{align*}
For differentiable functions $f$, the chain rule $\na S_N^{\beta-1}(f)=(\beta-1)[(f)_+^N]^{\beta-2}\na f$ is satisfied. Furthermore, we approximate $z^{\beta}$ by
\begin{align*}
  R_N^{\beta}(z) &:= \beta\int_0^z S_N^{\beta-1}(s)ds \\
  &= \begin{cases}
  0 &\mbox{if }z\le 0, \\
  z^{\beta} &\mbox{if }0<z\le N, \\
  N^\beta + \beta N^{\beta-1}(z-N) + \frac12(\beta-1)\beta
  N^{\beta-2}(z-N)^2 &\mbox{if }z>N.
  \end{cases} \nonumber 
\end{align*}
This definition guarantees the chain rule $\na R_N^{\beta}(f)=\beta S_N^{\beta-1}(f)\na f$.

\subsection{Solution of an approximate problem}

We introduce the following approximate scheme:
\begin{align}\label{3.approx4}
  \frac{1}{\tau}\int_\Omega(u_i^k-u_i^{k-1})\phi_i dx
  + \sigma_i\int_\Omega\na u_i^k\cdot\na\phi_i dx
  = \int_\Omega(u_i^k)_+^N v_i^k\cdot\na\phi_i dx
\end{align}
for all $\phi_i\in H^1(\Omega)$, where 
\begin{align*}
  v_i^k = L_\eps^\eta\bigg(-\sum_{j=1}^n a_{ij}\na S_N^\beta(u_j^k)
  \bigg).
\end{align*}
Let $u^{k-1}\in L^2(\Omega)$ with $u_i^{k-1}\ge 0$ in $\Omega$, $\delta\in[0,1]$, and $y\in L^2(\Omega)$ be given. We set
\begin{align*}
  \widetilde{v}_i = L_\eps^\eta\bigg(-\sum_{j=1}^n a_{ij}
  \na S_N^\beta(y_j)\bigg).
\end{align*}
Since $S_N^\beta(y_j)$ grows at most linearly, we have $S_N^\beta(y_j)\in L^2(\Omega)$. This yields $\widetilde{v}_i\in H^m(\Omega)\hookrightarrow L^\infty(\Omega)$ (since $m>d/2$). The linear problem reads as 
\begin{align}\label{3.lin4}
  \frac{\delta}{\tau}\int_\Omega(y_i-u_i^{k-1})\phi_i dx
  &+ \sigma_i\int_\Omega\na u_i^k\cdot\na\phi_i dx
  + \int_\Omega u_i^k\phi_i dx \\
  &= \delta\int_\Omega(y_i)_+^N\widetilde{v}_i\cdot\na\phi_i dx
  + \delta\int_\Omega y_i\phi_i dx \nonumber 
\end{align}
for all $\phi_i\in H^1(\Omega)$. It follows as in Section \ref{sec.beta1} that there exists a unique solution $u^k\in H^1(\Omega)$ to \eqref{3.lin4}. This defines the fixed-point operator $Q:L^2(\Omega;\R^n)\times[0,1]\to L^2(\Omega;\R^n)$, $Q(y,\delta)=u^k$. Like in the previous sections, the main task is to find an estimate for $u^k$ uniform in $\delta$. To this end, let $u^k$ be such a fixed point. The test function $\phi_i=(u_i)_-$ in \eqref{3.lin4} yields $u_i^k\ge 0$ in $\Omega$. We derive uniform bounds from the following approximate entropy inequality.

\begin{lemma}\label{lem.aei4}
Let $u^k$ be a fixed point of $Q(\cdot,\delta)$ with $0<\delta\le 1$. Then there exists a constant $C>0$, only depending on $u^0$ and $\Omega$, such that 
\begin{align*}
  \frac{\delta}{\tau}&\sum_{i=1}^n\int_\Omega
  \big(R_N^\beta(u_i^k)-u_i^k\big)dx
  + \frac{4}{\beta}(\beta-1)\sum_{i=1}^n\sigma_i\int_\Omega
  |\na S_N^{\beta/2}(u_i^k)|^2dx \\
  &+ \alpha(\beta-1)\delta\sum_{i=1}^n\int_\Omega 
  |K_\eps^\eta(\na S_N^\beta(u_i^k))|^2 dx
  \le \frac{1}{\tau}\int_\Omega
  \big(R_N^\beta(u_i^{k-1})-u_i^{k-1}\big)dx
  + (1-\delta)C.
\end{align*}
\end{lemma}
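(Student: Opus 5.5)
We test \eqref{3.lin4} with $y=u^k$ (a fixed point, so $u_i^k\ge 0$) against $\phi_i=S_N^{\beta-1}(u_i^k)-1$, which lies in $H^1(\Omega)$ because $S_N^{\beta-1}$ is Lipschitz for fixed $N$. Since $\beta\ge 2$, no $\mu$-regularization is needed: $S_N^{\beta-1}(0)=0$ and $S_N^{\beta-1}$ is $C^1$-like near zero. With $\phi_i=\beta^{-1}(R_N^\beta)'(u_i^k)-1$, the terms are handled as follows.

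\textbf{Time term.} $R_N^\beta$ is convex, since its derivative $\beta S_N^{\beta-1}$ is nondecreasing. This gives $\frac{\delta}{\tau}(u_i^k-u_i^{k-1})\beta S_N^{\beta-1}(u_i^k)\ge \frac{\delta}{\tau}(R_N^\beta(u_i^k)-R_N^\beta(u_i^{k-1}))$. We therefore multiply the test function by $\beta/(\beta-1)$, or equivalently rescale to $\phi_i=(\beta S_N^{\beta-1}(u_i^k)-1)/(\beta-1)$, and then match the constants in the statement, following the pattern of the previous lemma.

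\textbf{Diffusion term.} By the chain rule, $\sigma_i(\beta-1)[(u_i^k)_+^N]^{\beta-2}|\na u_i^k|^2$. Defining $S_N^{\beta/2}$ analogously via $\na S_N^{\beta/2}(f)=\frac{\beta}{2}[(f)_+^N]^{\beta/2-1}\na f$, this equals $\frac{4}{\beta^2}(\beta-1)\sigma_i|\na S_N^{\beta/2}(u_i^k)|^2$, which has the right shape.

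\textbf{Convection term.} $\delta\int(u_i^k)_+^N v_i^k\cdot(\beta-1)\beta[(u_i^k)_+^N]^{\beta-2}\na u_i^k$ must be rewritten as $\delta(\beta-1)\int v_i^k\cdot\na S_N^\beta(u_i^k)$. This requires $S_N^\beta$ to satisfy $\na S_N^\beta(f)=\beta[(f)_+^N]^{\beta-1}\na f$, which is consistent with the pressure truncation in the definition of $v_i^k$; this is precisely why the flux uses $(u_i^k)_+^N$. Summing over $i$ and using $v_i^k=L_\eps^\eta(-\sum_j a_{ij}\na S_N^\beta(u_j^k))$, the square root $K_\eps^\eta$, and positive definiteness, as in Lemma \ref{lem.aei1}, yields $\ge\alpha(\beta-1)\delta\sum_i\|K_\eps^\eta(\na S_N^\beta(u_i^k))\|^2$.

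\textbf{Remaining $\delta$-mismatch terms.} These are $\int u_i^k\phi_i-\delta\int u_i^k\phi_i=(1-\delta)\int u_i^k(\beta S_N^{\beta-1}(u_i^k)-1)$. The part $(1-\delta)\int u_i^k\beta S_N^{\beta-1}(u_i^k)\ge 0$ can be dropped (it sits on the left side). The part $-(1-\delta)\int u_i^k$ is bounded by $(1-\delta)\|u_i^k\|_{L^1}\le(1-\delta)\|u^0_i\|_{L^1}$, using the mass bound from testing with $\phi_i=1$ as in Section \ref{sec.beta1}. This produces $(1-\delta)C$.

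\textbf{Main obstacle.} The step I expect to be delicate is the time term with the factor $\delta$: the left side carries $\frac{\delta}{\tau}$ on $R_N^\beta(u_i^k)-u_i^k$, while the right side carries $\frac{1}{\tau}$ on the old entropy. Since $R_N^\beta(z)-z\ge -C$ is bounded below, and indeed $z^\beta-z\ge -1$ for $\beta\ge2$, the mismatch $\frac{1-\delta}{\tau}$ times the old entropy can be absorbed. Replacing $\delta$ by $1$ on the right costs at most a constant times $(1-\delta)$, with the $1/\tau$ factor absorbed into $C$ or handled by noting that the statement's right side is larger. One must also make sure that the $-1$ part of the test function in the time term gives exactly $-\frac{\delta}{\tau}\int(u_i^k-u_i^{k-1})$, which combines correctly.
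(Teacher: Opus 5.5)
Your treatment of the individual terms is the paper's proof. You test with the derivative of the entropy density and use the convexity of $R_N^\beta$, the chain rule for the truncations, the square root $K_\eps^\eta$, and the positive definiteness of $(a_{ij})$. You bound $-(1-\delta)\sum_i\int_\Omega u_i^k\,dx$ by the mass rather than by the paper's pointwise bound $\beta zS_N^{\beta-1}(z)-z\ge -c$; that variant is harmless.

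Your bookkeeping of the test function needs cleaning up. The right choice is exactly $\phi_i=\beta S_N^{\beta-1}(u_i^k)-1$, the derivative of $z\mapsto R_N^\beta(z)-z$ at $u_i^k$. It produces the constants $\frac{\delta}{\tau}$, $\frac{4}{\beta}(\beta-1)$ and $\alpha(\beta-1)\delta$ without any rescaling. Your first choice, $S_N^{\beta-1}(u_i^k)-1$, is the derivative of the different functional $R_N^\beta(z)/\beta-z$. Multiplying it by $\beta/(\beta-1)$ does not give $(\beta S_N^{\beta-1}(u_i^k)-1)/(\beta-1)$. Also, your diffusion constant $\frac{4}{\beta^2}(\beta-1)$ and your convection term are computed with two different test functions.

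The real gap is your last step. The testing gives $\frac{\delta}{\tau}E^{k-1}$ on the right, where $E^{k-1}=\sum_i\int_\Omega(R_N^\beta(u_i^{k-1})-u_i^{k-1})dx$. Neither of your two ways of replacing $\delta$ by $1$ works:
\begin{itemize}
\item Saying the statement's right-hand side is larger requires $E^{k-1}\ge 0$. This fails in general: for $u^{k-1}\equiv\frac12$ and $\beta=2$ one has $E^{k-1}=-\frac{n}{4}|\Omega|$.
\item Using $-\frac{1-\delta}{\tau}E^{k-1}\le(1-\delta)\frac{n|\Omega|}{\tau}$ puts a factor $1/\tau$ into $C$, which contradicts $C=C(u^0,\Omega)$.
\end{itemize}

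This cannot be repaired. Take $u^{k-1}\equiv\frac12$, $\beta=2$, $N>1$ and any $\delta\in(0,1)$. Then the constant $u_i=\delta/(2\delta+2\tau(1-\delta))$ is a fixed point of $Q(\cdot,\delta)$, since all $\widetilde v_i$ vanish. Every gradient term vanishes, and the inequality with $\frac{1}{\tau}$ on the right forces $C\ge n|\Omega|/(4\tau)$.

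So the $\frac{1}{\tau}$ in the statement cannot hold with a $\tau$-independent constant. The paper's proof stops at the inequality with $\frac{\delta}{\tau}$ in front of the old entropy. That is what the testing gives, and it is what the later argument uses: at fixed $\tau$ for the fixed-point bound, and with $\delta=1$ in \eqref{3.aei4}. You should conclude there, or state explicitly that your $C$ depends on $\tau$.
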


\begin{proof}
We use the test function $\phi_i = \beta S_N^{\beta-1}(u_i^k)-1$ in \eqref{3.lin4}, which is admissible since $\na S_N^{\beta-1}(u_i^k) = [(u_i^k)_+^N]^{\beta-2}\na u_i^k\in L^2(\Omega)$. At this point, we use the condition $\beta\ge 2$. We obtain, after summation over $i=1,\ldots,n$,
\begin{align*}
  0 &= \frac{\delta}{\tau}\sum_{i=1}^n\int_\Omega
  (u_i^k-u_i^{k-1})(\beta S_N^{\beta-1}(u_i^k)-1)dx
  + \sum_{i=1}^n\sigma_i\int_\Omega
  \na u_i^k\cdot\na(\beta S_N^{\beta-1}(u_i^k)-1)dx \\
  &\phantom{xx}+ (1-\delta)\sum_{i=1}^n\int_\Omega
  u_i^k(\beta S_N^{\beta-1}(u_i^k)-1)dx
  - \delta\sum_{i=1}^n\int_\Omega (u_i^k)_+^N v_i^k\cdot\na 
  (\beta S_N^{\beta-1}(u_i^k)-1)dx \\
  &=: J_1+\cdots+J_4.
\end{align*}
Since $R_N^\beta$ is convex and $(R_N^\beta)'(z)=\beta S_N^{\beta-1}(z)$, we find that
\begin{align*}
  J_1 \ge \frac{\delta}{\tau}\sum_{i=1}^n\int_\Omega
  \big(R_N^\beta(u_i^k)-u_i^k\big)dx
  - \frac{\delta}{\tau}\sum_{i=1}^n\int_\Omega
  \big(R_N^\beta(u_i^{k-1})-u_i^{k-1}\big)dx.
\end{align*}
Taking into account the chain rule, the second term $J_2$ can be formulated as
\begin{align*}
  J_2 = \beta(\beta-1)\sum_{i=1}^n\sigma_i\int_\Omega
  [(u_i^k)_+^N]^{\beta-2}|\na u_i^k|^2 dx
  = \frac{4}{\beta}(\beta-1)\sum_{i=1}^n\sigma_i\int_\Omega
  |\na S_N^{\beta/2}(u_i^k)|^2 dx.
\end{align*}
In view of $S_N^{\beta-1}(u_i^k)\ge 0$, the third term $J_3/(1-\delta)$ is bounded from below by a constant that depends only on $\Omega$. Finally, we compute
\begin{align*}
  J_4 &= -\beta(\beta-1)\delta 
  \sum_{i=1}^n\int_\Omega [(u_i^k)_+^N]^{\beta-1}
  L_\eps^\eta\bigg(-\sum_{j=1}^n a_{ij}\na S_N^\beta(u_j^k)
  \bigg)\cdot\na u_i^k dx \\
  &= (\beta-1)\delta\sum_{i,j=1}^n a_{ij}\int_\Omega 
  L_\eps^\eta(\na S_N^\beta(u_j^k))\cdot\na S_N^\beta(u_i^k) dx \\
  &= (\beta-1)\delta\sum_{i,j=1}^n a_{ij}\int_\Omega 
  K_\eps^\eta(\na S_N^\beta(u_j^k))\cdot
  K_\eps^\eta(\na S_N^\beta(u_i^k))dx \\
  &\ge \alpha(\beta-1)\delta \sum_{i=1}^n \int_\Omega 
  |K_\eps^\eta(\na S_N^\beta(u_i^k))|^2 dx,
\end{align*}
where we used the positive definiteness of $(a_{ij})$ in the last step. We summarize the previous estimations:
\begin{align*}
  0 &= \frac{\delta}{\tau}\sum_{i=1}^n\int_\Omega
  \big(R_N^\beta(u_i^k)-u_i^k\big)dx
  - \frac{\delta}{\tau}\sum_{i=1}^n\int_\Omega
  \big(R_N^\beta(u_i^{k-1})-u_i^{k-1}\big)dx - (1-\delta)C(\Omega) \\
  &\phantom{xx}+ \frac{4}{\beta}(\beta-1)
  \sum_{i=1}^n\sigma_i\int_\Omega|\na S_N^{\beta/2}(u_i^k)|^2 dx
  + \alpha(\beta-1)\delta\sum_{i=1}^n\int_\Omega 
  |K_\eps^\eta(\na S_N^\beta(u_i^k))|^2 dx.
\end{align*} 
This ends the proof.
\end{proof}

The approximate entropy inequality from Lemma \ref{lem.aei4} yields a uniform bound for $u_i^k$ in $L^2(\Omega)$ (since $R_N^\beta(z)\ge z^2$ for $\beta\ge 2$), which provides the required bound for the fixed-point operator. We conclude from the Leray--Schauder fixed-point theorem that there exists a weak solution $u_i^k$ to \eqref{3.approx4}. 

\subsection{Limit $N\to\infty$}

 Since $v_i^k$ is bounded in $H^m(\Omega)\hookrightarrow L^\infty(\Omega)$ uniformly in $N$ (but not in $\eta$), we can apply the Alikakos iteration technique and prove similarly as in \cite[Lemma 9]{JVZ24} that there exists $C(\eta)>0$ such that $\|u_i^k\|_{L^\infty(\Omega)}\le C(\eta)$. Using $\phi_i=u_i^k$ as a test function in \eqref{3.approx4} then yields after standard estimations a bound for $u_i^{(N)}:=u_i^k$ in $H^1(\Omega)$ uniform in $N$ (but not in $\eta$). By compactness, there exists a subsequence (not relabeled) such that
\begin{align*}
  u_i^{(N)}\to u_i \quad\mbox{strongly in }L^2(\Omega)
  \mbox{ as }N\to\infty.
\end{align*}
Taking into account that $S_N^\beta(z)\to z^\beta$ for $z\ge 0$ pointwise as $N\to\infty$ and the uniform $L^\infty(\Omega)$ bound for $u_i^{(N)}$, we obtain $S_N^\beta(u_i^{(N)})\to u_i^\beta$ strongly in $L^p(\Omega)$ for any $p<\infty$. Moreover, we have
\begin{align*}
  \na u_i^{(N)}\rightharpoonup\na u_i
  &\quad\mbox{weakly in }L^2(\Omega), \\
  \na S_N^\beta(u_i^{(N)})\rightharpoonup \na u_i^\beta, \quad
  \na S_N^{\beta/2}(u_i^{(N)})\rightharpoonup \na u_i^{\beta/2}
  &\quad\mbox{weakly in }L^2(\Omega), \\
  L_\eps^\eta\bigg(-\sum_{j=1}^n a_{ij}
  \na S_N^\beta(u_j^{(N)})\bigg)\rightharpoonup 
  L_\eps^\eta(-\na p_i(u)) =: v_i
  &\quad\mbox{weakly in }H^m(\Omega). 
\end{align*}
Thus, we can pass to the limit $N\to\infty$ in \eqref{3.approx4} to find that $u_i^k:=u_i$ solves
\begin{align*}
  \frac{1}{\tau}\int_\Omega(u_i^k-u_i^{k-1})\phi_i dx
  + \sigma_i\int_\Omega\na u_i^k\cdot\na\phi_i dx
  = \int_\Omega u_i^k v_i^k\cdot\na\phi_i dx
\end{align*}
for all $\phi_i\in H^1(\Omega)$, $i=1,\ldots,n$. We can also pass to the limit $N\to\infty$ in the approximate entropy inequality, taking into account the weak lower semicontinuity of the norms, to infer that
\begin{align}\label{3.aei4}
  \frac{1}{\tau}&\sum_{i=1}^n\int_\Omega((u_i^k)^\beta-u_i^k)dx
  + \frac{4}{\beta}(\beta-1)\sum_{i=1}^n\sigma_i\int_\Omega 
  |\na (u_i^k)^{\beta/2}|^2 dx \\
  &+ \alpha(\beta-1)\sum_{i=1}^n\int_\Omega
  |K_\eps^\eta(\na (u_i^k)^\beta)|^2 dx
  \le \frac{1}{\tau}\int_\Omega((u_i^{k-1})^\beta-u_i^{k-1})dx. \nonumber 
\end{align}

\subsection{Limit $(\eta,\tau)\to 0$}

Summing \eqref{3.aei4} over $k=1,\ldots,N$ and using notation \eqref{3.utau}, we deduce the following estimates uniform in $(\eta,\tau)$ and $\eps$:
\begin{align*}
  \|u_i^{(\tau)}\|_{L^\infty(0,T;L^\beta(\Omega))}
  + \|(u_i^{(\tau)})^{\beta/2}\|_{L^2(0,T;H^1(\Omega))} 
  + \|v_i^{(\tau)}\|_{L^2(\Omega_T)} \le C.
\end{align*}
We infer from the Gagliardo--Nirenberg inequality \eqref{3.GN} that $(u_i^{(\tau)})$ is bounded in the space $L^{\beta(d+2)/d}(\Omega_T)$. This shows as in Section \ref{sec.beta3} that $(u_i^{(\tau)}v_i^{(\tau)})$ is bounded in $L^s(\Omega_T)$ with $1/s = d/(\beta(d+2)) + 1/2$, leading to $s = 2\beta(d+2)/(2d+\beta(d+2))>1$. 

Let $\phi_i\in L^{s'}(0,T;W^{2,s'}(\Omega))$ with $s'=s/(s-1)$, satisfying $\na\phi_i\cdot\nu=0$ on $\pa\Omega$. We estimate the discrete time derivative:
\begin{align*}
  \bigg|\frac{1}{\tau}\int_0^T\int_\Omega
  &(u_i^{(\tau)}-\pi_\tau u_i^{(\tau)})\phi_i dx\bigg|
  \le \sigma_i\|u_i^{(\tau)}\|_{L^2(\Omega_T)}
  \|\Delta\phi_i\|_{L^2(\Omega_T)} \\
  &+ \|u_i^{(\tau)}v_i^{(\tau)}\|_{L^s(\Omega_T)}
  \|\na\phi_i\|_{L^{s'}(\Omega_T)}
  \le C\|\phi_i\|_{L^{s'}(0,T;W^{2,s'}(\Omega))}.
\end{align*}
Hence, $\tau^{-1}(u_i^{(\tau)}-\pi_\tau u_i^{(\tau)})$ is uniformly bounded in $L^s(0,T;W^{2,s'}(\Omega)')$. We deduce from the Aubin--Lions lemma in the version of \cite{CJL14} that, up to a subsequence, as $(\eta,\tau)\to 0$,
\begin{align*}
  u_i^{(\tau)}\to u_i \quad\mbox{strongly in }L^\beta(\Omega_T).
\end{align*}
In view of the uniform $L^{\beta(d+2)/2}(\Omega_T)$ bound for $u_i^{(\tau)}$, this convergence also holds in $L^\rho(\Omega_T)$ for any $2<\rho<\beta(d+2)/d$. Furthermore, it holds that
\begin{align*}
  \pa_t u_i^{(\tau)}\rightharpoonup\pa_t u_i
  &\quad\mbox{weakly in }L^{s}(0,T;W^{2,s'}(\Omega)'), \\
  v_i^{(\tau)}\rightharpoonup v_i
  &\quad\mbox{weakly in }L^2(\Omega_T). 
\end{align*}
In particular, we have $u_i^{(\tau)}v_i^{(\tau)}\rightharpoonup u_iv_i$ weakly in $L^s(\Omega_T)$ for some $s>1$. However, we do not obtain a bound on $\na u_i^{(\tau)}$, except if $\beta=2$, and we cannot expect any weak convergence $\na u_i^{(\tau)}\rightharpoonup\na u_i$. The limit $(\eta,\tau)\to 0$ in
\begin{align*}
  \frac{1}{\tau}\int_0^T\int_\Omega(u_i^{(\tau)}-\pi_\tau u_i^{(\tau)})
  \phi_i dxdt - \sigma_i\int_0^T\int_\Omega u_i^{(\tau)}\Delta\phi_i dxdt
  = \int_0^T\int_\Omega u_i^{(\tau)}v_i^{(\tau)}\cdot\na\phi_i dxdt
\end{align*}
for any $\phi_i\in L^\infty(0,T;W^{2,\infty}(\Omega))$ satisfying $\na\phi_i\cdot\nu=0$ on $\pa\Omega$ yields
\begin{align*}
  \int_0^T\langle\pa_t u_i,\phi_i\rangle dt
  - \sigma_i\int_0^T\int_\Omega u_i\Delta\phi_i dxdt
  = \int_0^T\int_\Omega u_iv_i\cdot\na\phi_i dxdt.
\end{align*}
Finally, the limit $v_i$ is identified exactly as in the previous section, since $u_i^{(\tau)}$ converges strongly in $L^\beta(\Omega_T)$ (this implies that $p_i(u^{(\tau)})\to p_i(u)$ strongly in $L^1(\Omega_T)$), and the initial data is satisfied in the sense of $W^{2,s'}(\Omega)'$. 


\section{Localization limit}\label{sec.loc}

The proofs of the previous sections show that the estimates from the entropy inequality are independent of $\eps$. Let $(u^{(\eps)},v^{(\eps)})$ be a weak solution to \eqref{1.u}--\eqref{1.bic}. Then there exists a constant $C>0$, independent of $\eps$ such that
\begin{align*}
  \|u_i^{(\eps)}\|_{L^\infty(0,T;L^{\max\{1,\beta\}}(\Omega))}
  + \|(u_i^{(\eps)})^{\beta/2}\|_{L^2(0,T;H^1(\Omega))}
  + \|v_i^{(\eps)}\|_{L^2(\Omega_T)} \le C.
\end{align*}
In case $\beta<1$, the Gagliardo--Nirenberg inequality \eqref{3.GN1} shows that $(u_i^{(\eps)})$ is bounded in $L^{\beta+2/d}(\Omega_T)$, while in case $\beta>1$, Lemma \ref{lem.eta3} gives a uniform bound for $u_i^{(\eps)}$ in $L^{\beta(d+2)/d}(\Omega_T)$. 

\subsection{Case $0<\beta<1/d$, $d=1$}

In this case, we find that $(u_i^{(\eps)})$ is bounded in $L^{\beta+2}(\Omega_T)$. Since $(v_i^{(\eps)})$ is bounded in $L^2(\Omega_T)$, the product $(u_i^{(\eps)}v_i^{(\eps)})$ is bounded in $L^s(\Omega_T)$ for some $s>1$. Furthermore, $(u_i^{(\eps)})^{(2-\beta)/2}$ is uniformly bounded in $L^{2(\beta+2)/(2-\beta)}(\Omega_T)$ and consequently,
\begin{align*}
  \na u_i^{(\eps)} = (2/\beta)(u_i^{(\eps)})^{(2-\beta)/2}
  \na(u_i^{(\eps)})^{\beta/2}
\end{align*}
is uniformly bounded in $L^r(\Omega_T)$ with $1/r = (2-\beta)/(2(\beta+2)) + 1/2 = 2/(\beta+2)<1$. We conclude that $(u_i^{(\eps)})$ is bounded in $L^r(0,T;W^{1,r}(\Omega))$, and  $(\pa_t u_i^{(\eps)})$ is bounded in $L^s(0,T;W^{1,s'}(\Omega)')$, where $s'=s/(s-1)$. 

The Aubin--Lions lemma \cite{Sim87} yields the existence of a subsequence (not relabeled) such that, as $\eps\to 0$,
\begin{align*}
  u_i^{(\eps)}\to u_i\quad\mbox{strongly in }L^r(\Omega_T),
\end{align*} 
and the uniform bound in $L^{\beta+2}(\Omega_T)$ shows that this convergence also holds in $L^\rho(\Omega_T)$ with $2<\rho<\beta+2$. We infer from $v_i^{(\eps)}\rightharpoonup v_i$ weakly in $L^2(\Omega_T)$ that
\begin{align*}
  u_i^{(\eps)}v_i^{(\eps)}\rightharpoonup u_iv_i
  \quad\mbox{weakly in }L^1(\Omega_T). 
\end{align*}
Moreover, we have
\begin{align*}
  \na u_i^{(\eps)}\rightharpoonup \na u_i
  &\quad\mbox{weakly in }L^r(\Omega_T), \\
  \pa_t u_i^{(\eps)}\rightharpoonup\pa_t u_i
  &\quad\mbox{weakly in }L^s(0,T;W^{1,s'}(\Omega)').
\end{align*}
The limit $\eps\to 0$ in the weak formulation of \eqref{1.u} gives
\begin{align*}
  \pa_t u_i - \sigma_i\Delta u_i + \diver(u_iv_i) = 0
  \quad\mbox{in }\Omega, \quad (\sigma_i\na u_i + u_iv_i)\cdot\nu=0
  \quad\mbox{on }\pa\Omega,\ t>0. 
\end{align*}

We know from \eqref{2.Leps} with $g=-\na p_i(u^{(\eps)})$ that $(\sqrt{\eps}\na v_i^{(\eps)})$ and $(v_i^{(\eps)})$ are bounded in $L^2(\Omega_T)$. Furthermore, $(p_i(u^{(\eps)}))$ is bounded in $L^{(\beta+2)/\beta}(\Omega_T)$. Thus, up to subsequences,
\begin{align*}
  \eps\na v_i^{(\eps)}\to 0 &\quad\mbox{strongly in }L^2(\Omega_T), \\
  p_i(u^{(\eps)})\to p_i(u) &\quad\mbox{strongly in }
  L^{\rho}(\Omega_T) \quad\text{for all }\rho <(\beta+2)/\beta.
\end{align*}
The limit $\eps\to 0$ in the weak formulation
\begin{align*}
  \int_0^T\int_\Omega(\eps\na v_i^{(\eps)}:\na\phi_i
  + v_i^{(\eps)}\cdot\phi_i)dxdt = -\int_0^T\int_\Omega
  p_i(u^{(\eps)})\diver\phi_i dxdt
\end{align*}
for $\phi_i\in C_0^\infty(\Omega)$ yields $v_i = -\na p_i(u)$ in the sense of distributions. We conclude that $u_i$ solves
\begin{align*}
  \pa_t u_i - \sigma_i\Delta u_i - \diver(u_i\na p_i(u)) = 0
  \quad\mbox{in }\Omega,\ t>0.
\end{align*}
Finally, the bound for $(u_i^{(\eps)})$ in $W^{1,s}(0,T;W^{1,s'}(\Omega)')\hookrightarrow C^0([0,T];W^{1,s'}(\Omega)')$ implies that the initial datum is satisfied in the sense of $W^{1,s'}(\Omega)'$. 

\subsection{Case $1<\beta<2$, $d=2$}

We know from Lemma \ref{lem.eta3} that $(u_i^{(\eps)})$ is bounded in $L^q(0,T;W^{1,q}(\Omega_T))$ with $q=4\beta/(\beta+2)>4/3$. Since $(u_i^{(\eps)})$ is bounded in $L^{2\beta}(\Omega_T)$ and $\beta>1$, the $L^2(\Omega_T)$ bound for $(v_i^{(\eps)})$ shows that $(u_i^{(\eps)}v_i^{(\eps)})$ is bounded in $L^s(\Omega_T)$ for some $s>1$. These two estimates give a uniform bound for the time derivative of $u_i^{(\eps)}$. Therefore, we can apply the Aubin--Lions lemma to find a subsequence that is not relabeled such that, as $\eps\to 0$,
\begin{align*}
  u_i^{(\eps)}\to u_i\quad\mbox{strongly in }L^q(\Omega_T),
\end{align*}
and this convergence also holds in $L^\rho(\Omega_T)$ for $2<\rho<2\beta$. We infer that
\begin{align*}
  u_i^{(\eps)}v_i^{(\eps)}\rightharpoonup u_iv_i\quad
  \mbox{weakly in }L^1(\Omega_T).
\end{align*}
Moreover, Lemma \ref{lem.eta3} implies that, for a subsequence,
\begin{align*}
  \na u_i^{(\eps)}\rightharpoonup\na u_i\quad\mbox{weakly in }
  L^q(\Omega_T).
\end{align*}
These convergences are sufficient to pass to the limit in the weak formulation of \eqref{1.u}. The identification $v_i=\na p_i(u)$ is proved as in the previous subsection, taking into account that $p_i(u^{(\eps)})\to p_i(u)$ strongly in $L^\rho(\Omega_T)$ for $\rho<2$. 

\subsection{Case $\beta\ge 2$, $d\ge 1$}

Recall that $(u_i^{(\eps)})$ is bounded in $L^{\beta(d+2)/d}(\Omega_T)$. Since $\beta(d+2)/d>2$, we can proceed as in the previous subsection to find that (up to a subsequence) $u_i^{(\eps)}v_i^{(\eps)}\rightharpoonup u_iv_i$ weakly in $L^1(\Omega_T)$ as $\eps\to 0$. We have, for a subsequence, $u_i^{(\eps)}\to u_i$ strongly in $L^\rho(\Omega_T)$ for $2<\rho<\beta(d+2)/d$ and hence $p_i(u^{(\eps)})\to p_i(u)$ strongly in $L^{\rho/\beta}(\Omega_T)$. The only difference to the case $\beta<2$ is that we do not have a gradient bound for $u_i^{(\eps)}$ anymore. Therefore, we pass to the limit $\eps\to 0$ in
\begin{align*}
  -\int_0^T\int_\Omega u_i^{(\eps)}\Delta\phi_i dxdt
  \quad\mbox{instead of}\quad
  \int_0^T\int_\Omega\na u_i^{(\eps)}\cdot\na\phi_i dxdt
\end{align*}
for suitable test functions $\phi_i$ satisfying $\na\phi_i\cdot\nu=0$ on $\pa\Omega$. The remaining proof is similar to those of the previous subsections.


\section{Numerical simulation}\label{sec.num}

We present numerical simulations to explore how the solutions' behavior varies with the exponent $\beta$. We use the software Netgen/NGSolve \cite{Sch14} to solve equations \eqref{1.u}--\eqref{1.bic}. We choose $\Omega=(0,1)^2$, $n=3$, $\sigma_i=0.1$ ($i=1,2,3$), and $\eps=0.001$. The coefficient matrix
\begin{align*}
  (a_{ij}) = \begin{pmatrix}
  5 & 1 & 1 \\ 1 & 1 & 0.5 \\ 1 & 0.5 & 0.5 \end{pmatrix}
\end{align*}
is positive definite. The initial datum is chosen as
\begin{align*}
  u_i^0(x,y) = \exp\big(-100(x-x_i)^2 - 100(y-y_i)^2\big) + 0.5,
\end{align*}
where $(x_i,y_i) = (0.25\cdot i,0.25\cdot i)$ for $i=1,2,3$. The mesh size is $h=0.05$ and the time step size equals $\tau=4\cdot 10^{-5}$. Figure \ref{fig} shows the sum $u_1+u_2+u_3$ at different time steps. For all values of $\beta$, the solution tends to a constant steady state, whose value is determined by the mass of the initial data. We see that the decay is faster for larger values of $\beta$. The same phenomenon can be observed for smaller values of $\eps$ (not shown). Since $a_{11}$ is much larger than $a_{22}$ and $a_{33}$, $u_1$ diffuses faster than the other species.

\begin{figure}[htb]\centering
\includegraphics[width=150mm]{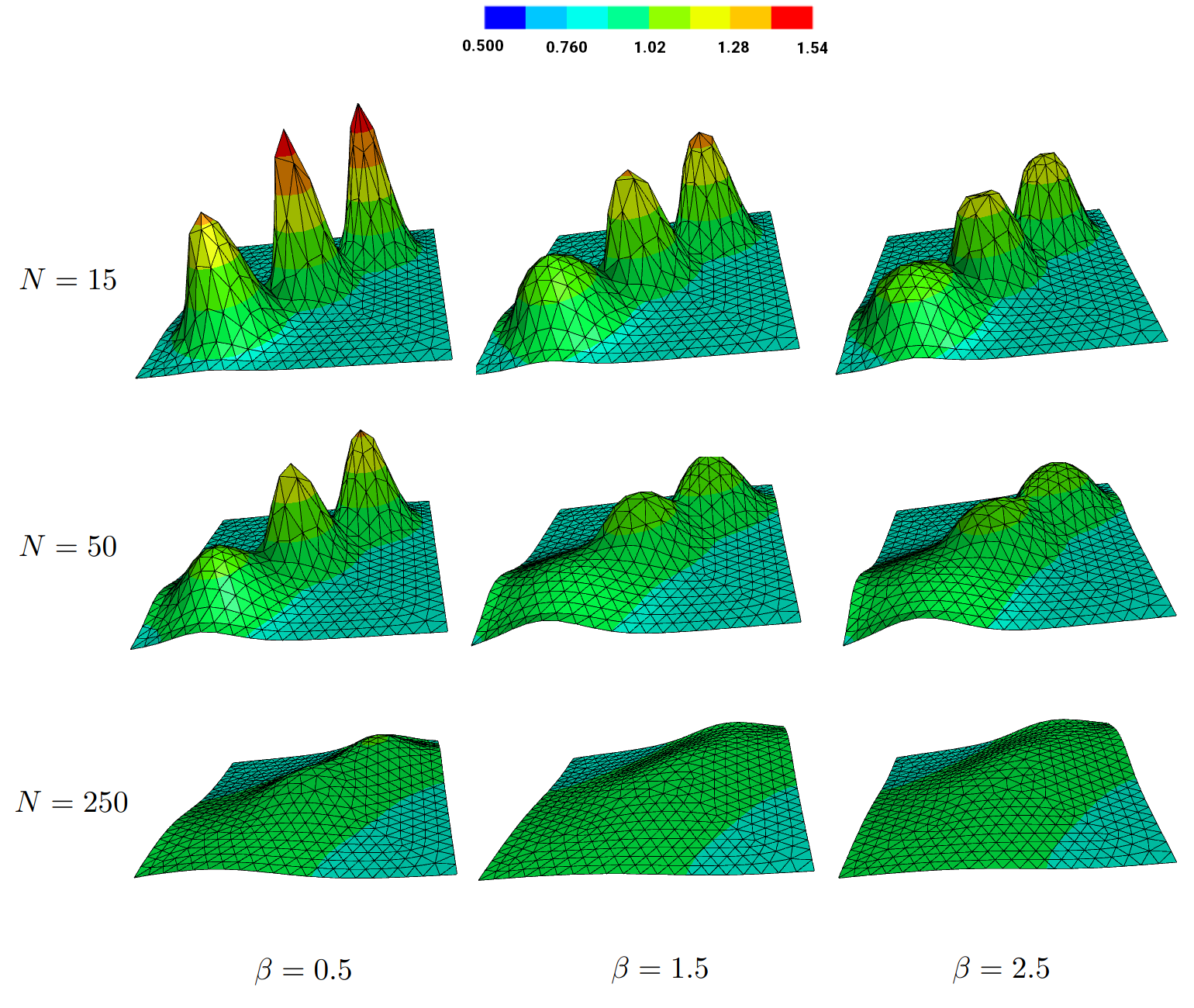}
\caption{Sum $u_1+u_2+u_3$ for $\beta=0.5$ (left column), $\beta=1,5$ (middle column), and $\beta=2.5$ (right column) at times $t=6\cdot 10^{-4}$ ($N=15$, top row), $t=2\cdot 10^{-3}$ ($N=50$, middle row), and $t=10^{-2}$ ($N=250$, bottom row).}
\label{fig}
\end{figure}


\begin{appendix}
\section{Modeling}\label{sec.model}

We derive equations \eqref{1.u}--\eqref{1.v} formally from a kinetic BGK (Bhatnagar--Gross--Krook) model, coupled to a Brinkman equation, following the approach of \cite{JPT25}. The kinetic equation for the distribution function $f_i^\delta(x,\xi,t)$, depending on the spatial variable $x\in\R^d$, velocity $\xi\in\R^d$, and time $t>0$, reads in the diffusion scaling as
\begin{align}\label{2.fdelta}
  \delta\pa_t f_i^\delta + \xi\cdot\na_x f_i^\delta
  + \na_x\Phi_i^\delta\cdot\na_\xi f_i^\delta
  = \frac{\nu_i}{\delta}(M_i(f_i^\delta)-f_i^\delta)
  \quad\mbox{in }\R^d,\ t>0,
\end{align}
for $i=1,\ldots,n$, where the scaling parameter $\delta>0$ relates to the Mach and Knudsen numbers and $\nu_i>0$ is the collision frequency of the $i$th species. Furthermore, $\Phi_i^\delta$ is a potential solving the nonlinear Brinkman law
\begin{align*}
  -\eps\Delta_x\Phi_i^\delta + \Phi_i^\delta = -P_i(\rho^\delta)
  = -\sum_{j=1}^n b_{ij}(\rho_j^\delta)^\beta
  \quad\mbox{in }\R^d,
\end{align*}
where $b_{ij}\in\R$, $\beta>0$, and the particle density $\rho_j^\delta$ is defined by
\begin{align*}
  \rho_j^\delta = \int_{\R^d}f_j^\delta d\xi, \quad j=1,\ldots,n.
\end{align*}
The Brinkman law is a generalization of the Darcy law $V_i=-\na_x P_i$, where $V_i$ is the partial velocity and $P_i$ is the partial pressure, by adding an elliptic regularization. The function $M_i(f_i^\delta)$ in \eqref{2.fdelta} is the Maxwell equilibrium distribution
\begin{align*}
  (M_i(f_i^\delta))(\xi) = \frac{\rho_i^\delta}{(2\pi)^{d/2}}
  \exp\bigg(-\frac{|\xi|^2}{2}\bigg),
\end{align*}
satisfying
\begin{align*}
  \int_{\R^d}M_i(f_i^\delta)d\xi = \rho_i^\delta, \quad
  \int_{\R^d}M_i(f_i^\delta)\xi d\xi = 0.
\end{align*}

Equations \eqref{1.u}--\eqref{1.v} are derived from a so-called Chapman--Enskog expansion:
\begin{align}\label{2.cee}
  f_i^\delta = f_i^0 + \delta g_i^\delta, \quad i=1,\ldots,n,
\end{align}
which defines the function $g_i^\delta$. The (formal) limit $\delta\to 0$ in \eqref{2.fdelta} gives $M_i(f_i^0) = f_i^0$, where
\begin{align*}
  (M_i(f_i^0))(\xi) = \frac{\rho_i}{(2\pi)^{d/2}}
  \exp\bigg(-\frac{|\xi|^2}{2}\bigg)
\end{align*}
and $\rho_i=\lim_{\delta\to 0}\rho_i^\delta$. We insert expansion \eqref{2.cee} into \eqref{2.fdelta},
\begin{align*}
  \delta\pa_t f_i^\delta + \xi\cdot\na_x(M_i(f_i^0)+\delta g_i^\delta)
  + \na_x\Phi_i^\delta\cdot\na_\xi(M_i(f_i^0)+\delta g_i^\delta)
  = -\nu_i g_i^\delta.
\end{align*}
The formal limit $\delta\to 0$ leads to
\begin{align}\label{2.g0}
  \xi\cdot\na_x M_i(f_i^0) + \na_x\Phi_i\cdot\na_\xi M_i(f_i^0)
  = -\nu_i g_i^0,
\end{align}
where we assume that $\Phi_i=\lim_{\delta\to 0}\Phi_i^\delta$ and $g_i^0 = \lim_{\delta\to 0}g_i^\delta$. 

We compute the moment equations. Integrating \eqref{2.fdelta} over $\xi\in\R^d$, the zeroth-order moment equation becomes
\begin{align*}
  \pa_t\int_{\R^d}f_i^\delta d\xi
  &+ \frac{1}{\delta}\int_{\R^d}M_i(f_i^0)\xi d\xi
  + \diver_x\int_{\R^d}g_i^\delta \xi d\xi
  + \frac{1}{\delta}\na_x\Phi_i^\delta\cdot\int_{\R^d}
  \na_\xi f_i^\delta d\xi \\
  &= -\frac{\nu_i}{\delta^2}\int_{\R^d}
  (M_i(f_i^\delta)-f_i^\delta)d\xi = 0.
\end{align*}
The second and fourth terms on the left-hand side vanish. The limit $\delta\to 0$ yields the mass conservation equation
\begin{align*}
  \pa_t\rho_i + \diver_x\int_{\R^d}g_i^0 \xi d\xi = 0.
\end{align*}
It remains to calculate $\int_{\R^d}g_i^0\xi d\xi$. For this, we multiply \eqref{2.g0} by $\xi$ and integrate over $\xi\in\R^d$:
\begin{align*}
  -\nu_i\int_{\R^d}g_i^0\xi d\xi &= \diver_x\int_{\R^d}
  M_i(f_i^0)(\xi\otimes\xi)d\xi + \na_x\Phi_i\cdot\int_{\R^d}
  \na_\xi M_i(f_i^0)\xi d\xi \\
  &= \na_x\int_{\R^d}M_i(f_i^0) d\xi
  - \na_x\Phi_i\cdot\int_{\R^d}M_i(f_i^0)d\xi
  = \na_x\rho_i - \rho_i\na_x\Phi_i.
\end{align*}
This shows that
\begin{align*}
  \pa_t\rho_i = -\frac{1}{\nu_i}\diver_x
  \big(\na_x\rho_i - \rho_i\na_x\Phi_i\big), \quad
  -\eps\Delta_x\Phi_i + \Phi_i = -\sum_{j=1}^n b_{ij}\rho_j.
\end{align*}
Setting $\sigma_i := 1/\nu_i$, $a_{ij} := b_{ij}/\nu_i$, $u_i:=\rho_i$, $v_i := \na_x\Phi_i/\nu_i$, and taking the gradient on both sides of the Brinkman law, we obtain \eqref{1.u}--\eqref{1.v}.

\section{Elliptic Regularity}\label{sec:appEllReg}
We give a brief justification of the elliptic regularity result in Assumption (A4). For second-order equations and $p\ge 2$, this result follows from \cite{Mey63}. The case $1<p<2$ is derived from a duality argument (see, e.g., the proof of Theorem 9.9, step (v) \cite{GiTr01}). Higher-order equations with right-hand side in $L^p(\Omega;\R^d)$ and $1<p<\infty$ are treated in \cite{ADN59}. In the whole-space setting, the elliptic regularity result can be obtained from the Fourier-transformed equation, related to \eqref{1.A4},
\begin{align*}
  (\eta|\xi|^{2m} + |\xi|^2 + 1)\widehat{v}(\xi) 
  = \mathrm{i}\xi\cdot\widehat{f}(\xi).
\end{align*}
This gives that
\begin{align*}
  |\widehat{v}(\xi)| 
  = \frac{|\xi\cdot\widehat{f}(\xi)|}{\eta|\xi|^{2m} + |\xi|^2 + 1}
  \le \frac{|\xi\cdot\widehat{f}(\xi)|}{|\xi|^2 + 1}, \quad
  |\widehat{\pa_{x_j}v(\xi)}|
  = |\mathrm{i} \xi_j\widehat{f}(\xi)|
  \le \frac{|\xi_j\xi\cdot\widehat{f}(\xi)|}{|\xi|^2 + 1}.
\end{align*}
For $p\in [1,2]$, we apply the Hausdorff--Young inequality to conclude that $\|\widehat v\|_{L^{p'}(\R^d)} \leq \|\widehat f\|_{L^{p'}(\R^d)}\leq C\|f\|_{L^p(\R^d)}$. Thus, the $W^{1,p}(\R^d)$ norm of $v$ is bounded uniformly in $\eta$ after another application of the Hausdorff--Young inequality. The case $p> 2$ again follows by duality. In bounded domains, we may first establish interior estimates in cubes, then derive an estimate near the boundary (by flattening), and finally combine both estimates and use a finite covering argument to achieve a global estimate. Observe that the higher-order term only improves the coercivity; it does not worsen the $W^{1,p}(\Omega)$ behavior.

\end{appendix}


\end{document}